\numberwithin{equation}{section}
\newtheorem{theorem}{Theorem}[section]
\newtheorem{proposition}[theorem]{Proposition}
\newtheorem{corollary}[theorem]{Corollary}
\theoremstyle{definition}
\newtheorem{definition}[theorem]{Definition}
\newtheorem{remark}[theorem]{Remark}
\begin{document}


\topmargin3mm
\hoffset=-1cm
\voffset=-1.5cm
\

\medskip
\begin{center}
{\large\bf The enhanced quotient graph of the quotient of a finite group
}
\vspace{6mm}\\
\footnotetext[2]{This work was partially supported by CONACYT.}

\end{center}

\medskip
\begin{center}
Luis A. Dupont, Daniel G. Mendoza and Miriam Rodr\'{\i}guez.
\\
{\small Facultad de Matem\'aticas, Universidad Veracruzana}\vspace{-1mm}\\
{\small Circuito Gonzalo Aguirre Beltr\'an S/N;}\vspace{-1mm}\\
{\small Zona Universitaria;}\vspace{-1mm}\\
{\small Xalapa, Ver., M\'exico, CP 91090.}\vspace{-1mm}\\
{\small e-mail: {\tt ldupont@uv.mx}\vspace{4mm}}
\end{center}

\medskip

\begin{abstract}
For a finite group $G$ with a normal subgroup $H$, the enhanced quotient graph of $G/H$, denoted by $\mathcal{G}_{H}(G),$ is the graph with vertex set $V=(G\backslash H)\cup \{e\}$ and two vertices $x$ and
$y$ are edge connected if $xH = yH$ or $xH,yH\in \langle zH\rangle$ for some $z\in G$. In this article, we characterize the enhanced quotient graph of $G/H$. The graph $\mathcal{G}_{H}(G)$ is complete if and only if $G/H$ is cyclic, and $\mathcal{G}_{H}(G)$ is Eulerian
if and only if $|G/H|$ is odd. We show some
relation between the graph $\mathcal{G}_{H}(G)$ and the enhanced power graph $\mathcal{G}(G/H)$ that was introduced by Sudip Bera and A.K. Bhuniya (2016).  The graph $\mathcal{G}_H(G)$ is complete if and only if $G/H$ is cyclic if and only if $\mathcal{G}(G/H)$ is complete. The graph $\mathcal{G}_H(G)$ is Eulerian if and only if $|G|$ is odd if and only if $\mathcal{G}(G)$ is Eulerian, i.e., the property of being Eulerian does not depend on the normal subgroup $H$.

\bigskip\noindent \textbf{Keywords:} enhanced power graph; enhanced quotient graph; Eulerian graph; planar graph.
\bigskip\noindent \textbf{AMS Mathematics Subject Classification:} 05C25, 05C38, 05C45.
\end{abstract}

\section{Introduction}

The investigation of graphs related to groups as
well as other algebraic structures is an exciting research topic in
the last few decades, ~\cite{Abdollahi,Abe,AndersonLivings,Atani,Ballester1,Ballester2,Chakrabarty,Elavarasan,Herzog,Redmond,Willians,Wu}. Only basic concepts about graphs will be needed for
this paper. They can be found in any book about graph theory, for example ~\cite{Diestel}.
\\
Given a finite group $G$, there are many ways to associate a graph to $G$ by
taking families of elements or subgroups as vertices and letting two vertices be
joined by an edge if and only if they satisfy a property. All groups in this paper are finite. The \emph{enhanced power graph} of a group was introduced by Sudip Bera and A. K. Bhuniya~\cite{Bera-Bhuniya}.
\begin{definition}
Given a finite group $G$, the \emph{enhanced power graph} of $G$ denoted by $\mathcal{G}(G),$ is the graph with vertex
set $V(\mathcal{G}(G))=G$ and two distinct vertices $x, y$ are edge connected $\{x,y\}$ in $E(\mathcal{G}(G))$  if there exists $z \in G$ such that
$x,y \in \langle z\rangle$.
\end{definition}
We will apply the idea of studying the enhanced power graph through the graphs of the quotient groups, as was carried out by the authors for the normal subgroup based power graph of a finite group in ~\cite{Bera-Bhuniya2}.
\begin{definition}
For a finite group $G$ with a normal subgroup $H$, the \emph{enhanced quotient graph} of $G/H$, denoted by $\mathcal{G}_{H}(G),$ with vertex set $V(\mathcal{G}_{H}(G))=(G\backslash H)\cup \{e\}$ and two vertices $x$ and
$y$ are edge connected, (i.e. $\{x,y\}\in E(\mathcal{G}_{H}(G))$), if $xH = yH$ or $xH,yH\in \langle zH\rangle$ for some $z\in G$.
\end{definition}

Let $G$ be a finite group of order $n$ and $H$ be a normal subgroup of $G$ with $|H|=m$. In this paper we provide some results on the basic structure of the enhanced quotient graph and we show the interplay between the graph $\mathcal{G}_{H}(G)$ and the enhanced power graph $\mathcal{G}(G/H)$.

\section{Definitions and structure}
In this section we provide the first results of the two graphs of groups that we will study in this article, the \emph{enhanced power graph} of $G$ denoted by $\mathcal{G}(G)$ and the \emph{enhanced quotient graph} of $G/H$, denoted by $\mathcal{G}_{H}(G)$. We will see the results of the graph $\mathcal{G}_{H}(G)$ in comparison to the graph $\mathcal{G}(G/H)$ which was introduced in~\cite{Bera-Bhuniya}.

\begin{proposition}
$\mathcal{G}_H(G)$ is connected.
\end{proposition}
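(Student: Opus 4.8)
The plan is to show something stronger than mere connectivity: the vertex $e$ is adjacent to every other vertex, i.e.\ $e$ is a universal (dominating) vertex of $\mathcal{G}_H(G)$. Once this is established, connectivity is immediate, since any two vertices $x$ and $y$ are then joined by the path $x - e - y$.

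First I would record the key observation about the quotient group $G/H$. The vertex $e$ of $\mathcal{G}_H(G)$ corresponds to the identity coset $eH = H$ of $G/H$, and this coset is the identity element of $G/H$. Consequently, for every element $z \in G$, the cyclic subgroup $\langle zH\rangle \leq G/H$ contains the identity coset $eH$, simply because every subgroup contains the identity.

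Next I would verify the edge condition between $e$ and an arbitrary vertex. Let $x$ be any vertex with $x \neq e$; then $x \in G\setminus H$, so $xH \neq H$. Choosing $z = x$, the cyclic subgroup $\langle xH\rangle$ contains both $eH$ and $xH$. Hence $eH, xH \in \langle zH\rangle$ for $z = x$, which is exactly the defining condition for $\{e,x\}$ to be an edge of $\mathcal{G}_H(G)$. Therefore $e$ is adjacent to every other vertex.

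There is no serious obstacle in this argument; the only point requiring care is the correct interpretation of the vertex $e$ as the identity of the quotient $G/H$, together with the elementary fact that this identity lies in every cyclic subgroup $\langle zH\rangle$. Everything else is a direct application of the edge definition. I would close by remarking that, since $e$ is a universal vertex, $\mathcal{G}_H(G)$ is connected (indeed, of diameter at most $2$).
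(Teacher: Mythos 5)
Your proposal is correct and follows essentially the same route as the paper: the paper's proof also observes that $eH, xH \in \langle xH\rangle$ for every $x \in G\setminus H$, so that $e$ is adjacent to every other vertex and connectivity follows. Your write-up merely makes explicit the "universal vertex, hence diameter at most $2$" conclusion that the paper leaves implicit.
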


\begin{proof}
Since $x^nH = eH$ and consequently $xH, eH \in \langle xH \rangle,$ $\{ x, e\}$ is and edge of  $\mathcal{G}_H(G)$ for all $x \in G \setminus H.$
\end{proof}

\begin{proposition}
For all $g \in G,$ $gH \cap V(\mathcal{G}_H(G))$ is a clique of the graph $\mathcal{G}_H(G).$
\end{proposition}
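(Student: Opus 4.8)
The plan is to reduce everything to the first clause of the adjacency relation, namely that $x$ and $y$ are joined whenever $xH = yH$. Since all elements of a fixed coset $gH$ share the same image in $G/H$, any two of them automatically satisfy this condition; thus the only genuine work is to identify correctly which elements of $gH$ actually lie in the vertex set $V(\mathcal{G}_H(G)) = (G \setminus H) \cup \{e\}$.

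First I would split into two cases according to whether $g \in H$. If $g \in H$, then $gH = H$, and intersecting with $V$ removes every element of $H$ except the identity, because $H \cap (G \setminus H) = \emptyset$ and $e \in H$; hence $gH \cap V(\mathcal{G}_H(G)) = \{e\}$, a single vertex, which is trivially a clique. If instead $g \notin H$, then the cosets $gH$ and $H$ are disjoint, so $gH \subseteq G \setminus H \subseteq V(\mathcal{G}_H(G))$ and $e \notin gH$; therefore $gH \cap V(\mathcal{G}_H(G)) = gH$ is the entire coset.

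In this second case I would take two arbitrary distinct vertices $x, y \in gH$. Since they lie in the same coset, $xH = gH = yH$, so the first condition $xH = yH$ in the definition of adjacency holds, and consequently $\{x,y\} \in E(\mathcal{G}_H(G))$. As this holds for every such pair, $gH \cap V(\mathcal{G}_H(G))$ is a clique in both cases, which completes the argument.

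There is essentially no hard step here: the statement is a direct unwinding of the definition once the vertex set is properly accounted for, and no appeal to the cyclic structure of $\langle zH\rangle$ is needed. The only point requiring a moment's care is the bookkeeping in the case $g \in H$, where the coset meets the vertex set only in $e$; overlooking this could mislead one into worrying about adjacencies among elements of $H \setminus \{e\}$, which are not even vertices of $\mathcal{G}_H(G)$.
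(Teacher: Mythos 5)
Your proposal is correct and uses essentially the same argument as the paper: any two vertices in the same coset satisfy $xH = yH$ and are therefore adjacent by the first clause of the definition of $E(\mathcal{G}_H(G))$. The extra case analysis on whether $g \in H$ (where the intersection reduces to $\{e\}$) is careful bookkeeping that the paper's one-line proof omits, but it does not change the approach.
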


\begin{proof}
Let $gh_1$ and $gh_2$ be elements of $gH \cap V(\mathcal{G}_H(G)),$ then $gh_1 =gH =gh_2H$ hence $\{gh_1, gh_2\} \in E_h.$
\end{proof}

\begin{corollary}
The graph $\mathcal{G}_H(G)$ contains at least $[G:H]-1$ isomorphic subgraphs to $K_m.$
\end{corollary}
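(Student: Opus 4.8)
The plan is to exploit the coset partition of $G$ together with the preceding proposition, which already guarantees that each coset meets $V(\mathcal{G}_H(G))$ in a clique. First I would recall that the $[G:H]$ distinct cosets of $H$ partition $G$, and single out the identity coset $H = eH$ from the remaining $[G:H]-1$ cosets. The key reduction is to show that every non-identity coset lands entirely inside the vertex set $V(\mathcal{G}_H(G))$.

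Next, for any coset $gH$ with $g \notin H$, I would observe that $gH \cap H = \emptyset$, so $gH \subseteq G \setminus H \subseteq V(\mathcal{G}_H(G))$; hence $gH \cap V(\mathcal{G}_H(G)) = gH$ is a set of exactly $m = |H|$ vertices. By the preceding proposition this set is a clique, so the subgraph it induces is isomorphic to $K_m$. Since distinct cosets are disjoint, the $[G:H]-1$ non-identity cosets furnish $[G:H]-1$ pairwise vertex-disjoint copies of $K_m$ inside $\mathcal{G}_H(G)$, which is exactly the claim. The identity coset is correctly excluded from the count, because $H \cap V(\mathcal{G}_H(G)) = \{e\}$ contributes only the single vertex $e$, i.e.\ a copy of $K_1$.

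I do not expect a serious obstacle here: the only point requiring care is verifying that a non-identity coset lies wholly in the vertex set $V(\mathcal{G}_H(G))$, which hinges on the fact that $e$ is the sole element of $H$ retained as a vertex. Once that membership is checked, the result follows immediately from the clique property established in the previous proposition, and the disjointness of cosets supplies the required multiplicity.
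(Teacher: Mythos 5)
Your proof is correct and is exactly the argument the paper intends: the corollary is stated without proof as an immediate consequence of the preceding proposition, namely that each of the $[G:H]-1$ cosets $gH$ with $g\notin H$ lies wholly in $V(\mathcal{G}_H(G))$ and is therefore a clique on $m=|H|$ vertices, yielding pairwise disjoint copies of $K_m$. Your extra care in checking that $gH\subseteq G\setminus H$ and in excluding the identity coset (which contributes only the vertex $e$) fills in precisely the details the paper leaves implicit.
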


\begin{proposition}\label{propoclique}
If $aH, bH \in G/H$ with $aH \neq bH,$ $a,b \notin H$ and some element of $aH$ forms an edge of
some element with $bH,$ then $aH \cup bH$ is a clique of $\mathcal{G}_H(G).$
\end{proposition}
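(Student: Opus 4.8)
The plan is to exploit the fact that adjacency in $\mathcal{G}_H(G)$ is determined entirely by the cosets of its endpoints: whether $\{x,y\}$ is an edge depends on $x$ and $y$ only through $xH$ and $yH$, since every clause of the definition (``$xH=yH$'' and ``$xH,yH\in\langle zH\rangle$'') is phrased purely in terms of cosets. In other words, the edge condition factors through the quotient map $G\to G/H$. Once this observation is isolated, a single cross-edge between $aH$ and $bH$ is forced to propagate to every cross-pair.

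Concretely, I would first extract from the hypothesis an element $a'\in aH$ and an element $b'\in bH$ with $\{a',b'\}\in E(\mathcal{G}_H(G))$. Because $a,b\notin H$ and $aH\neq bH$, the vertices $a'$ and $b'$ lie in distinct cosets, so the first clause $a'H=b'H$ fails; the edge must therefore arise from the second clause, yielding some $z\in G$ with $a'H,b'H\in\langle zH\rangle$, that is, $aH,bH\in\langle zH\rangle$. Next I would take arbitrary $x\in aH$ and $y\in bH$; since $xH=aH$ and $yH=bH$, the very same $z$ witnesses $xH,yH\in\langle zH\rangle$, so $\{x,y\}\in E(\mathcal{G}_H(G))$. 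Hence every element of $aH$ is joined to every element of $bH$.

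To finish, I would combine this with the earlier proposition stating that each $gH\cap V(\mathcal{G}_H(G))$ is a clique, applied to $g=a$ and $g=b$ (here $aH,bH\subseteq G\setminus H\subseteq V(\mathcal{G}_H(G))$ because $a,b\notin H$, so the cosets sit entirely inside the vertex set). Together with the cross-edges just produced, this makes all pairs within $aH\cup bH$ adjacent, so $aH\cup bH$ is a clique. The only real subtlety---hardly an obstacle---is the bookkeeping around the two-clause definition: one must confirm that the given edge genuinely comes from the cyclic-subgroup clause rather than from coset equality, which is precisely where the assumption $aH\neq bH$ is used. Everything else is immediate from the coset-invariance of the edge relation.
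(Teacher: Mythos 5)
Your proposal is correct and follows essentially the same route as the paper: use $aH\neq bH$ to rule out the coset-equality clause, extract the witness $z$ with $aH,bH\in\langle zH\rangle$, and propagate this to all cross-pairs by coset-invariance of the edge relation. Your explicit appeal to the earlier proposition (each coset is a clique) for the within-coset pairs is a small completeness improvement over the paper's write-up, which leaves that step implicit, but the argument is the same.
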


\begin{proof}
Suppose that $ah_1 \in aH, \; bh_2 \in bH$ with $\{ah_1, bh_2\} \in E(\mathcal{G}_H(G)).$ Then,
$ah_1 = aH, bh_2H = bH \in \langle zH \rangle$ for some $z \in G,$ that is,
$aH = z^rH$ and $bH = z^sH$ for all $ah' \in aH$ and for all $bh'' \in bH$ $ah'H, bh''H \in \langle zH \rangle.$
Therefore, $\{ah', bh''\} \in E(\mathcal{G}_H(G)).$
\end{proof}

\begin{corollary}
The graph $\mathcal{G}_H(G)$ contains at least $|E(\mathcal{G}(G/H))|-[G:H]+1$ isomorphic subgraphs to $K_{2m}.$
\end{corollary}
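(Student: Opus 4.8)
The plan is to manufacture one copy of $K_{2m}$ from each edge of $\mathcal{G}(G/H)$ that avoids the identity vertex, and then simply count how many such edges there are. The engine is Proposition~\ref{propoclique}, so the whole argument reduces to translating an edge-count in $\mathcal{G}(G/H)$ into a subgraph-count in $\mathcal{G}_H(G)$.

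First I would check that an edge $\{aH,bH\}$ of $\mathcal{G}(G/H)$ with $aH\neq bH$ and $a,b\notin H$ triggers the hypothesis of Proposition~\ref{propoclique}. Indeed, adjacency in $\mathcal{G}(G/H)$ means $aH,bH\in\langle zH\rangle$ for some $z$, and then for any $ah_1\in aH$, $bh_2\in bH$ we have $ah_1H=aH$ and $bh_2H=bH$ both lying in $\langle zH\rangle$, so $\{ah_1,bh_2\}\in E(\mathcal{G}_H(G))$; thus some element of $aH$ does form an edge with some element of $bH$. (The converse holds too, since for $aH\neq bH$ the defining condition of $\mathcal{G}_H(G)$ can only be met via a common cyclic subgroup $\langle zH\rangle$.) Proposition~\ref{propoclique} then gives that $aH\cup bH$ is a clique of $\mathcal{G}_H(G)$, and because $|aH|=|bH|=m$ with the two cosets disjoint, this clique has exactly $2m$ vertices, hence is a copy of $K_{2m}$.

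Next I would argue that distinct qualifying edges produce distinct subgraphs. Since the left cosets of $H$ partition $G$, the vertex set $aH\cup bH$ recovers the unordered pair $\{aH,bH\}$, so the map sending an edge to its clique is injective; no two different edges can yield the same $K_{2m}$. Finally I would count the qualifying edges. In $\mathcal{G}(G/H)$ the identity coset $H$ belongs to $\langle zH\rangle$ for every $z\in G$, so $H$ is a universal vertex and accounts for exactly $[G:H]-1$ edges. Discarding these leaves
\[
|E(\mathcal{G}(G/H))|-\bigl([G:H]-1\bigr)=|E(\mathcal{G}(G/H))|-[G:H]+1
\]
edges with both endpoints nontrivial, and therefore at least this many copies of $K_{2m}$ in $\mathcal{G}_H(G)$.

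The argument is largely bookkeeping once Proposition~\ref{propoclique} is available; the only point that genuinely requires care is the edge count, namely confirming that the identity coset is a universal vertex of $\mathcal{G}(G/H)$ so that precisely $[G:H]-1$ edges must be removed, and remembering that only nontrivial cosets contribute a full $K_m$ (the trivial coset meets $V(\mathcal{G}_H(G))$ in the single vertex $e$). The bound is stated with ``at least'' because unions of three or more mutually adjacent cosets form larger cliques that contain additional copies of $K_{2m}$ which this pairwise construction makes no attempt to enumerate.
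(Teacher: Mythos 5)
Your proof is correct and follows the route the paper intends: the paper states this corollary without any proof, as an immediate consequence of Proposition~\ref{propoclique}, and your argument supplies exactly the intended bookkeeping. That is, one copy of $K_{2m}$ for each edge of $\mathcal{G}(G/H)$ avoiding the identity coset, with the count $|E(\mathcal{G}(G/H))|-[G:H]+1$ coming from the observation that the identity coset is a universal vertex of degree $[G:H]-1$, and distinctness of the cliques coming from the fact that cosets partition $G$.
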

\begin{definition}
A \emph{clique} $K$, in a graph $\mathcal{F} = (V, E)$ is a subset of the vertices of $\mathcal{F}$ such that every two distinct vertices are adjacent. This is equivalent to the condition that the induced subgraph of $\mathcal{F}$ induced by $K$ is a complete graph. A \emph{maximum clique} of a graph is a clique such that there is no clique with more vertices. The \emph{clique number} $\omega(\mathcal{F})$ of a graph $\mathcal{F}$ is the number of vertices in a maximum clique in $\mathcal{F}$.
\end{definition}
\begin{corollary}\label{paraplanas}
For $\mathcal{G}_H(G)$, $\omega(\mathcal{G}_H(G))=|H|^{s-1}+1$, where $s$ is the highest order of a cyclic subgroup of $G/H$.
\end{corollary}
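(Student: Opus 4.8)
The plan is to transfer the computation to the quotient and exploit the fibres of the natural projection $\pi\colon V(\mathcal{G}_H(G))\to G/H$ given by $\pi(x)=xH$. Note that $\pi$ is surjective, that $\pi^{-1}(H)\cap V(\mathcal{G}_H(G))=\{e\}$, and that for every coset $C\neq H$ the whole fibre $C=\pi^{-1}(C)$ (of size $m=|H|$) lies in $V(\mathcal{G}_H(G))$. If $S$ is a clique of $\mathcal{G}_H(G)$, then any two of its elements lying in different cosets are adjacent, so the set $\pi(S)$ is a clique of the enhanced power graph $\mathcal{G}(G/H)$; conversely, by Proposition~\ref{propoclique} (together with the within-coset cliques of Proposition \ref{propoclique}'s precursor) every clique $T$ of $\mathcal{G}(G/H)$ blows up to the clique $\bigcup_{C\in T}\big(C\cap V(\mathcal{G}_H(G))\big)$ of $\mathcal{G}_H(G)$. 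Since the coset $H$ contributes only the vertex $e$ while every other coset contributes $m$ vertices, this identifies $\omega(\mathcal{G}_H(G))$ with the maximum, over all cliques $T$ of $\mathcal{G}(G/H)$, of $m\cdot|T\setminus\{H\}|$ increased by $1$ whenever $H\in T$.

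For the lower bound I would take a cyclic subgroup $\langle wH\rangle\leq G/H$ of order $s$. Its elements $H,wH,\dots,w^{s-1}H$ form a clique of $\mathcal{G}(G/H)$ containing $H$, and blowing it up produces a clique of $\mathcal{G}_H(G)$ with $m(s-1)+1$ vertices; hence $\omega(\mathcal{G}_H(G))\ge |H|(s-1)+1$.

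The heart of the argument is the matching upper bound, for which I would first prove the following fact about the enhanced power graph of any finite group $K$: every clique $T$ of $\mathcal{G}(K)$ is contained in a cyclic subgroup. Indeed, if $a,b\in T$ then $\langle a,b\rangle$ is cyclic, hence abelian, so all elements of $T$ commute and $\langle T\rangle$ is abelian; decomposing $\langle T\rangle$ into its primary components and using that two elements of a finite abelian $p$-group generate a cyclic group only when their cyclic subgroups are comparable, one sees that in each component the subgroups $\langle a_p\rangle$ $(a\in T)$ form a chain, and choosing the top of each chain yields a single element whose cyclic subgroup contains all of $T$. Applying this with $K=G/H$ gives $|T|\le s$ for every clique $T$ of $\mathcal{G}(G/H)$; moreover, if $|T|=s$ then $T$ must equal a cyclic subgroup of order $s$, which necessarily contains the identity coset $H$. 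Feeding this into the formula of the first paragraph, any clique $T$ contributes at most $m(s-1)+1$ vertices after blow-up: when $H\notin T$ one has $|T|\le s-1$, so the blow-up has at most $m(s-1)$ vertices, and when $H\in T$ one has $|T\setminus\{H\}|\le s-1$, so the blow-up has at most $m(s-1)+1$ vertices. This yields $\omega(\mathcal{G}_H(G))\le |H|(s-1)+1$ and hence the equality $\omega(\mathcal{G}_H(G))=|H|(s-1)+1$.

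The step I expect to be the main obstacle is the upper bound, in two respects: establishing that a clique of an enhanced power graph lies inside a single cyclic subgroup (the commutativity observation reduces this to the abelian case, after which the primary decomposition finishes it), and the careful bookkeeping of the identity coset, since the fibre over $H$ has size $1$ rather than $m$ and one must verify that a clique avoiding $H$ cannot use the extra room to beat the bound — which is precisely why $|T|=s$ is forced to include $H$.
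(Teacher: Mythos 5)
Your argument is correct, but note first that it proves $\omega(\mathcal{G}_H(G)) = |H|(s-1)+1$, whereas the corollary as printed asserts $\omega(\mathcal{G}_H(G)) = |H|^{s-1}+1$; the two expressions agree only when $s \le 2$. The printed exponent cannot be right: for $G = \mathbb{Z}_8$ and $H = \{0,4\}$ we have $G/H \cong \mathbb{Z}_4$, so $s=4$ and $\mathcal{G}_H(G)$ is the complete graph $K_7$ (by the paper's own completeness theorem, since $G/H$ is cyclic), giving $\omega = 7 = |H|(s-1)+1$, while $|H|^{s-1}+1 = 9$ exceeds the number of vertices. So the statement contains a typo (exponentiation where a product is meant), and what you have proved is the corrected statement.

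As for the proof itself: the paper's entire proof is the sentence ``It follows from the proof of Proposition~\ref{propoclique},'' and your write-up supplies exactly what that deferral omits. Proposition~\ref{propoclique} only shows that \emph{two} cosets with adjacent representatives blow up to a clique; together with the coset-clique observation this gives your lower bound $|H|(s-1)+1$, but it does not give the upper bound, because a priori a clique of $\mathcal{G}_H(G)$ could project to cosets lying \emph{pairwise} in common cyclic subgroups of $G/H$ without all of them lying in a single cyclic subgroup. Your key lemma -- every clique of an enhanced power graph is contained in one cyclic subgroup, proved via pairwise commutativity, primary decomposition, and the chain of cyclic $p$-subgroups -- closes precisely this gap, and your bookkeeping for the identity coset (whose fibre is $\{e\}$ rather than a full coset of size $|H|$, forcing a maximum clique's projection of size $s$ to contain $H$) handles the remaining subtlety that the paper passes over silently. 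So your route is the rigorous version of the paper's intended argument rather than a different one, and it is the more valuable for being complete.
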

\begin{proof}
It follows from the proof of Proposition~\ref{propoclique}.
\end{proof}

\begin{proposition}
Let $a,b$ be elements of $V(\mathcal{G}_H(G)).$ Then, $\{a,b\} \in E(\mathcal{G}_H(G))$ if and only if $aH = bH$ or
$\{ aH, bH\} \in E(\mathcal{G}(G/H)).$
\end{proposition}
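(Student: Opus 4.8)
The plan is to prove the equivalence directly by unwinding the two edge definitions, the key observation being that the cyclic subgroup $\langle zH\rangle$ of the quotient $G/H$ appearing in the definition of $\mathcal{G}_H(G)$ is literally the same object as the cyclic subgroup generated by the coset $zH$ that governs adjacency in the enhanced power graph $\mathcal{G}(G/H)$. Thus the only real content is translating the condition ``$aH,bH\in\langle zH\rangle$ for some $z\in G$'' into the language of $\mathcal{G}(G/H)$, where by definition two \emph{distinct} vertices $\bar x,\bar y$ are joined precisely when $\bar x,\bar y\in\langle \bar z\rangle$ for some $\bar z\in G/H$.

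For the forward direction I would assume $\{a,b\}\in E(\mathcal{G}_H(G))$ and split on the defining disjunction. If $aH=bH$ the conclusion already holds. Otherwise there is $z\in G$ with $aH,bH\in\langle zH\rangle$; since in this branch $aH\neq bH$, the cosets $aH$ and $bH$ are two distinct vertices of $\mathcal{G}(G/H)$ both lying in the cyclic subgroup $\langle zH\rangle$, so $\{aH,bH\}\in E(\mathcal{G}(G/H))$ by the very definition of the enhanced power graph. For the backward direction I would again split: if $aH=bH$ then $\{a,b\}$ is an edge of $\mathcal{G}_H(G)$ immediately from the first clause of its definition; and if $\{aH,bH\}\in E(\mathcal{G}(G/H))$ then there exists $\bar z=zH$ with $aH,bH\in\langle zH\rangle$, which is exactly the second clause, whence $\{a,b\}\in E(\mathcal{G}_H(G))$.

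The one step requiring care --- and the reason the disjunct ``$aH=bH$ or'' cannot be dropped --- is the distinctness convention in the enhanced power graph: $\mathcal{G}(G/H)$ carries edges only between distinct vertices, so when $a\neq b$ but $aH=bH$ the pair $\{a,b\}$ is a genuine edge of $\mathcal{G}_H(G)$ while $aH$ and $bH$ collapse to a single vertex of $\mathcal{G}(G/H)$ and hence form no edge there. Tracking this case separately via the explicit $aH=bH$ disjunct is the main, and essentially the only, subtlety; everything else is a routine appeal to the two definitions, so I expect no genuine obstacle beyond keeping this distinction explicit.
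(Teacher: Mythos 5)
Your proposal is correct and follows essentially the same route as the paper's proof: both directions are handled by directly unwinding the two edge definitions, with the case split on whether $aH = bH$. Your explicit attention to the distinctness convention in $\mathcal{G}(G/H)$ (cosets collapsing to a single vertex) is a slightly more careful articulation of the same point the paper handles implicitly with its ``if $aH \neq bH$'' clause.
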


\begin{proof}
If $\{a,b\} \in E(\mathcal{G}_H(G)),$ then $aH, bH \in \langle zH \rangle$ for some $z \in G.$
Then, if $aH \neq bH$ we would have that $\{aH, bH \} \in E(\mathcal{G}(G/H)).$
On one hand, $\{aH, bH\} \in E(\mathcal{G}(G/H))$ implies that $aH, bH \in \langle zH \rangle$ for some
$z \in G,$ thus $\{a, b\} \in E(\mathcal{G}_H(G)).$ On the other hand, $aH = bH$ implies that $aH, bH \in \langle aH \rangle.$
Hence, $\{a,b\}\in E(\mathcal{G}_H(G)).$
\end{proof}

\begin{corollary}
The graph $\mathcal{G}_H(G)$ contains at least $|H|^{[G:H]-1}$ isomorphic subgraphs to $\mathcal{G}(G/H).$
\end{corollary}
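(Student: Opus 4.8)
The plan is to realize the required copies of $\mathcal{G}(G/H)$ as subgraphs induced on transversals of $H$ in $G$, exploiting the edge characterization established in the preceding proposition. First I would observe how the cosets sit inside the vertex set $V(\mathcal{G}_H(G)) = (G\setminus H)\cup\{e\}$: the trivial coset $H = eH$ is represented there only by $e$, since the remaining elements of $H$ are excluded, whereas each of the $[G:H]-1$ nontrivial cosets $gH$ contributes all $|H| = m$ of its elements as vertices. A transversal of the relevant type is then obtained by forcing $e$ as the representative of the identity coset and choosing one element $t_{gH}\in gH$ from each nontrivial coset.

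For a fixed such transversal $T$, I would examine the subgraph of $\mathcal{G}_H(G)$ induced on $T$ and define $\phi\colon T \to G/H$ by $\phi(t) = tH$. Since $T$ meets every coset in exactly one element, $\phi$ is a bijection onto $V(\mathcal{G}(G/H)) = G/H$. To verify that $\phi$ is a graph isomorphism, take distinct $a,b\in T$; they lie in distinct cosets, so $aH\neq bH$, and the preceding proposition then gives that $\{a,b\}\in E(\mathcal{G}_H(G))$ if and only if $\{aH,bH\}\in E(\mathcal{G}(G/H))$, i.e.\ if and only if $\{\phi(a),\phi(b)\}$ is an edge of $\mathcal{G}(G/H)$. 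Hence the subgraph induced on $T$ is isomorphic to $\mathcal{G}(G/H)$.

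Finally I would count the transversals of this form. Choosing them amounts to picking one representative from each of the $[G:H]-1$ nontrivial cosets, with $e$ fixed for the identity coset, giving $m^{[G:H]-1} = |H|^{[G:H]-1}$ choices. Distinct transversals differ in the chosen representative of at least one coset, so they have distinct vertex sets and therefore yield distinct induced subgraphs, each isomorphic to $\mathcal{G}(G/H)$. This exhibits the asserted $|H|^{[G:H]-1}$ copies.

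The only point requiring care, rather than a genuine obstacle, is checking that $\phi$ preserves edges in both directions through the proposition, and noting that restricting attention to transversals, so that no two selected vertices share a coset, is precisely what lets one bypass the ``$aH = bH$'' clause of the edge criterion. Once the transversal construction is set up, the counting is immediate.
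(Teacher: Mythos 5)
Your proof is correct and follows exactly the route the paper intends: the corollary is stated without proof as an immediate consequence of the preceding proposition, and your transversal construction (fixing $e$ for the identity coset, choosing one representative from each of the $[G:H]-1$ nontrivial cosets, and using the proposition's edge criterion to see that each induced subgraph is a copy of $\mathcal{G}(G/H)$) is precisely the argument being implied. The count $|H|^{[G:H]-1}$ and the observation that distinct transversals give distinct vertex sets complete the verification just as the paper's authors envision.
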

\begin{definition}
For a finite group $W$ and $w$ an element of $W$, we denote by $\mathcal{G}_{en}(w)$ the set of generators of $\langle w \rangle$.
\end{definition}

\begin{proposition}\label{teorema2.1GPE} \emph{\cite{Bera-Bhuniya}}
For $a,b \in G$ with $|a|=|b|$ and $\langle a \rangle \neq \langle b\rangle$
we have that none of the vertices in $\mathcal{G}_{en}(a)$ is adjacent with vertices in
$\mathcal{G}_{en}(b),$ in the graph of $\mathcal{G}(G).$
\end{proposition}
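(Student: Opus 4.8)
The plan is to argue by contradiction, reducing the adjacency condition to a uniqueness statement about subgroups of a cyclic group. First I would unpack the two definitions in play. Pick any $x \in \mathcal{G}_{en}(a)$ and any $y \in \mathcal{G}_{en}(b)$; by definition of $\mathcal{G}_{en}$ this means $\langle x \rangle = \langle a \rangle$ and $\langle y \rangle = \langle b \rangle$. In particular $|x| = |a|$ and $|y| = |b|$, so the hypotheses $|a| = |b|$ and $\langle a \rangle \neq \langle b \rangle$ translate into $|x| = |y|$ together with $\langle x \rangle \neq \langle y \rangle$. The goal is then to show $\{x, y\} \notin E(\mathcal{G}(G))$ for every such choice.

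Next I would assume toward a contradiction that $x$ and $y$ are adjacent in $\mathcal{G}(G)$. By the definition of the enhanced power graph there exists $z \in G$ with $x, y \in \langle z \rangle$, and hence both cyclic subgroups $\langle x \rangle$ and $\langle y \rangle$ are contained in the cyclic group $\langle z \rangle$. The key step is to invoke the structure of finite cyclic groups: $\langle z \rangle$ possesses exactly one subgroup of each order dividing $|z|$. Since $\langle x \rangle$ and $\langle y \rangle$ are subgroups of $\langle z \rangle$ of the common order $|x| = |y|$, this uniqueness forces $\langle x \rangle = \langle y \rangle$, whence $\langle a \rangle = \langle b \rangle$, contradicting the hypothesis. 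As $x$ and $y$ were arbitrary, no vertex of $\mathcal{G}_{en}(a)$ is adjacent to any vertex of $\mathcal{G}_{en}(b)$.

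I do not expect a serious obstacle here; the argument hinges entirely on the one-subgroup-per-order property of cyclic groups, which is exactly what makes the enhanced power graph separate distinct equal-order cyclic subgroups. The only minor point to keep clean is that adjacency in $\mathcal{G}(G)$ is declared for \emph{distinct} vertices, so I would observe that $x \neq y$ holds automatically: if $x = y$ then $\langle x \rangle = \langle y \rangle$ would again contradict $\langle a \rangle \neq \langle b \rangle$. With that remark in place the contradiction above is complete.
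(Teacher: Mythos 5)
Your proof is correct. Note that the paper itself offers no proof of this proposition --- it is quoted from Bera--Bhuniya \cite{Bera-Bhuniya} as background --- but your argument is the standard one: adjacency of a generator of $\langle a\rangle$ with a generator of $\langle b\rangle$ would place both subgroups inside a single cyclic group $\langle z\rangle$, and the one-subgroup-per-order property of finite cyclic groups then forces $\langle a\rangle=\langle b\rangle$, a contradiction. This is exactly the mechanism the paper relies on when it proves its own quotient analogue (the proposition immediately following, for $\mathcal{G}(G/H)$ and $\mathcal{G}_H(G)$), so your attempt matches the intended reasoning; your closing remark that $x\neq y$ holds automatically is a small point the cited source leaves implicit.
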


\begin{proposition}
For $a,b \in G\setminus H$ with $|aH|=|bH|$ and $\langle aH \rangle \neq \langle bH\rangle$
we have that none of the vertices  in $\mathcal{G}_{en}(aH)$ is adjacent with vertices in
$\mathcal{G}_{en}(bH),$ in the graph of $\mathcal{G}(G/H)$ and the graph $\mathcal{G}_H(G).$
\end{proposition}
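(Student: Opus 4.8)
The plan is to treat the two assertions separately, establishing the claim about $\mathcal{G}(G/H)$ first and then bootstrapping from it to the claim about $\mathcal{G}_H(G)$. The whole argument leans on two facts already available: Proposition~\ref{teorema2.1GPE} (the Bera--Bhuniya result for enhanced power graphs) and the earlier proposition characterizing the edges of $\mathcal{G}_H(G)$, namely that $\{a,b\}\in E(\mathcal{G}_H(G))$ if and only if $aH=bH$ or $\{aH,bH\}\in E(\mathcal{G}(G/H))$.

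For $\mathcal{G}(G/H)$, the key observation is that $G/H$ is itself a finite group and that $aH$ and $bH$ are elements of it satisfying $|aH|=|bH|$ and $\langle aH\rangle\neq\langle bH\rangle$. Hence I would apply Proposition~\ref{teorema2.1GPE} verbatim, with $G/H$ playing the role of $G$ and $aH,bH$ playing the roles of $a,b$. This immediately yields that no element of $\mathcal{G}_{en}(aH)$ is adjacent in $\mathcal{G}(G/H)$ to any element of $\mathcal{G}_{en}(bH)$.

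For $\mathcal{G}_H(G)$, I would interpret ``the vertices in $\mathcal{G}_{en}(aH)$'' as those $g\in V(\mathcal{G}_H(G))$ with $gH\in\mathcal{G}_{en}(aH)$, and likewise for $b$. Assume for contradiction that such vertices $g,g'$ are adjacent, so $\{g,g'\}\in E(\mathcal{G}_H(G))$. By the edge-characterization proposition this forces either $gH=g'H$ or $\{gH,g'H\}\in E(\mathcal{G}(G/H))$, and I would rule out each case. The first is impossible because $\mathcal{G}_{en}(aH)$ and $\mathcal{G}_{en}(bH)$ are disjoint: if a single coset generated both $\langle aH\rangle$ and $\langle bH\rangle$, then $\langle aH\rangle=\langle bH\rangle$, contrary to hypothesis. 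The second is precisely what the $\mathcal{G}(G/H)$ case established above forbids. Either way we reach a contradiction, so no such edge exists.

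The main obstacle is really a point of care rather than a genuine difficulty: the bookkeeping between the two vertex sets. In $\mathcal{G}(G/H)$ the elements of $\mathcal{G}_{en}(aH)$ are cosets that serve directly as vertices, whereas in $\mathcal{G}_H(G)$ they must be lifted to honest group elements. One must confirm that $a,b\notin H$ guarantees $aH,bH\neq eH$, so that these cosets are genuinely nontrivial vertices and their generator sets are legitimately disjoint. Once the edge-characterization proposition is invoked, the $\mathcal{G}_H(G)$ statement reduces entirely to the $\mathcal{G}(G/H)$ statement, and the proof is complete.
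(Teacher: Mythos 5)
Your proof is correct, and the first half coincides with the paper's: both of you dispose of the $\mathcal{G}(G/H)$ claim by applying Proposition~\ref{teorema2.1GPE} verbatim to the group $G/H$. Where you diverge is in the $\mathcal{G}_H(G)$ half. The paper argues directly from the definition of edges in $\mathcal{G}_H(G)$: an edge between lifts $x,y$ of generators of $\langle aH\rangle$ and $\langle bH\rangle$ forces $xH,yH\in\langle zH\rangle$ for some $z$, hence $\langle aH\rangle$ and $\langle bH\rangle$ are two distinct subgroups of equal order inside the cyclic group $\langle zH\rangle$ --- impossible, since a cyclic group has at most one subgroup of each order. You instead invoke the earlier edge-characterization proposition ($\{g,g'\}\in E(\mathcal{G}_H(G))$ iff $gH=g'H$ or $\{gH,g'H\}\in E(\mathcal{G}(G/H))$) and rule out both alternatives: equal cosets would force $\langle aH\rangle=\langle bH\rangle$ because generator sets of distinct cyclic subgroups are disjoint, and an edge in $\mathcal{G}(G/H)$ is exactly what the first half forbids. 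Your route is more modular --- the $\mathcal{G}_H(G)$ statement becomes a formal corollary of the $\mathcal{G}(G/H)$ statement plus the edge dictionary, with no group theory re-run --- whereas the paper's direct argument keeps the underlying group-theoretic reason (uniqueness of subgroups of a given order in cyclic groups) visible and does not depend on the earlier proposition. Both are complete; your explicit treatment of the degenerate case $gH=g'H$, which the paper absorbs silently into the condition $xH,yH\in\langle zH\rangle$, is a small point in your version's favor.
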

\begin{proof}
For the graph $\mathcal{G}(G/H)$ it follows from Proposition~\ref{teorema2.1GPE}.
For $\mathcal{G}_H(G)$ we see that if $xH \in \mathcal{G}_{en} (\langle aH\rangle)$ and $yH \in \mathcal{G}_{en} (\langle bH\rangle)$ are adjacent in $\mathcal{G}_H(G),$ then $\langle xH \rangle  \langle yH\rangle \subseteq \langle zH\rangle $ for some $z \in G.$ Then $\langle aH \rangle , \langle bH \rangle < \langle zH \rangle$ with $|\langle aH \rangle|=|\langle bH \rangle|$ and $\langle aH \rangle \neq \langle bH \rangle,$
a contradiction.
\end{proof}

\begin{proposition}\label{teorema2.2GPE} \emph{\cite{Bera-Bhuniya}}
The graph $\mathcal{G}(G)$ contains a cycle if and only if $|a| \geq 3,$
for some $a \in G.$
\end{proposition}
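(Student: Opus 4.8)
The plan is to prove the two directions of the biconditional separately, and I expect the heart of the argument to be a clean structural description of $\mathcal{G}(G)$ in the degenerate case where every nonidentity element is an involution.

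For the direction assuming some $a \in G$ has $|a| \geq 3$, I would exhibit an explicit cycle. Since $a$ has order at least $3$, the three elements $e$, $a$, and $a^2$ are pairwise distinct and all lie in the cyclic subgroup $\langle a \rangle$. By the defining adjacency rule of $\mathcal{G}(G)$---two distinct vertices are joined whenever they lie in a common cyclic subgroup---each of the pairs $\{e,a\}$, $\{a,a^2\}$, and $\{a^2,e\}$ is an edge, so $e, a, a^2, e$ traces a triangle and hence $\mathcal{G}(G)$ contains a cycle.

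For the converse I would argue by contraposition: assuming that $|a| \leq 2$ for every $a \in G$, I would show $\mathcal{G}(G)$ is acyclic. Under this hypothesis every cyclic subgroup of $G$ is either trivial or has order $2$, so any common cyclic subgroup $\langle z\rangle$ witnessing the adjacency of two distinct vertices $x,y$ must equal $\{e,z\}$ with $z$ an involution; in particular one of $x,y$ must be $e$. Consequently the only edges of $\mathcal{G}(G)$ are those joining $e$ to an involution, and no two distinct involutions are adjacent. This exhibits $\mathcal{G}(G)$ as a star with center $e$, which is a tree and therefore contains no cycle.

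The main obstacle---really the only subtle point---is verifying in the converse that no two nonidentity vertices can be adjacent: one must use that $|\langle z\rangle| \leq 2$ forces any cyclic subgroup to contain at most one nonidentity element, so it cannot simultaneously contain two distinct involutions and hence cannot witness their adjacency. Once this observation is in place, the star structure and the resulting acyclicity follow immediately.
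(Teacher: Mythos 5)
Your proof is correct. Note first that the paper itself gives no proof of this proposition: it is quoted from \cite{Bera-Bhuniya}, and the nearest argument actually written out is the proof of the quotient analogue immediately following it (that $\mathcal{G}_H(G)$ has a cycle iff $|aH|\geq 3$ for some $a$). Measured against that proof, your route is organizationally different though it rests on the same key fact. The paper argues the hard direction directly: from a cycle it takes an edge between distinct vertices $x_1H \sim x_2H$, a witness $zH$ with $x_1H, x_2H \in \langle zH\rangle$, and concludes $|zH|\geq 3$; this tacitly requires choosing the edge so that neither endpoint is the identity, so that $\langle zH\rangle$ contains the three distinct elements $H$, $x_1H$, $x_2H$ --- a selection step the paper glosses over (a cycle has at least three vertices, so such an edge exists). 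You instead prove the contrapositive: if every element has order at most $2$, then every cyclic subgroup contains at most one non-identity element, so every edge of $\mathcal{G}(G)$ is incident to $e$; the graph is a star, hence a tree, hence acyclic. The two arguments pivot on the identical observation (an edge between two distinct non-identity vertices forces a cyclic subgroup of order at least $3$), but your contrapositive packaging makes the degenerate global structure explicit and sidesteps the edge-selection subtlety, at the cost of being slightly longer; it also foreshadows the star-graph equivalences the paper records in Corollary~\ref{corollarybipartita}. The triangle $e \sim a \sim a^2 \sim e$ in the easy direction is the same in both (the paper dismisses it as trivial).
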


\begin{proposition}
The graph $\mathcal{G}_H(G)$ contains a cycle if and only if $|aH| \geq 3,$ for some $a \in G.$
\end{proposition}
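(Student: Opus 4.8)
The plan is to prove the two implications separately, using the edge criterion established above (that $\{a,b\}\in E(\mathcal{G}_H(G))$ holds if and only if $aH=bH$ or $\{aH,bH\}\in E(\mathcal{G}(G/H))$) together with the cyclic-subgroup criterion for the enhanced power graph, Proposition~\ref{teorema2.2GPE}, applied to the group $G/H$. The forward implication is purely constructive, while the reverse implication is the delicate one and is where I expect the real difficulty to lie.

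For the ``if'' direction, suppose some $a\in G$ has $|aH|\geq 3$. Then $H$, $aH$ and $a^2H$ are three pairwise distinct elements of $\langle aH\rangle$, so they form a triangle of $\mathcal{G}(G/H)$. Lifting to $\mathcal{G}_H(G)$, the vertices $e$, $a$, $a^2$ are distinct (since $aH\neq H$ and $a^2H\neq H$) and all lie in $V(\mathcal{G}_H(G))$; by the edge criterion each of the pairs $\{e,a\}$, $\{a,a^2\}$, $\{e,a^2\}$ is an edge, so $e\,a\,a^2\,e$ is a cycle. This settles one direction with no obstruction.

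For the ``only if'' direction I would argue by contraposition: assume no element of $G/H$ has order $\geq 3$, so every cyclic subgroup $\langle zH\rangle$ has order at most $2$ and $G/H$ has exponent $\leq 2$. Under the edge criterion this forces a rigid structure: two vertices lying in distinct non-identity cosets are never adjacent, since a common $\langle zH\rangle$ can contain at most the two cosets $H$ and $zH$. Hence $\mathcal{G}_H(G)$ is the single vertex $e$ joined to a disjoint union of the coset-cliques $gH$, and the hoped-for conclusion is that such a graph is acyclic.

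This last step is exactly where I expect the argument to break down, and I regard it as the main obstacle. By the coset-clique proposition each non-identity coset $gH$ is already a clique, and $e$ is adjacent to every one of its vertices, so $\{e\}\cup gH$ is a clique of size $|H|+1$ (the $s=2$ case of Corollary~\ref{paraplanas}). As soon as $|H|\geq 2$ this yields a triangle $e\,gh_1\,gh_2\,e$ with $gh_1,gh_2$ in a common coset, i.e.\ a cycle with no element of order $\geq 3$ in sight. Thus the reverse implication, and hence the stated equivalence, appears to require the extra hypothesis $|H|=1$ (in which case $\mathcal{G}_H(G)\cong\mathcal{G}(G)$ and the claim reduces directly to Proposition~\ref{teorema2.2GPE}); handling general $H$ would mean either restricting the statement or folding the condition $|H|\geq 2$ into the criterion. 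Pinning down this point is the crux, and it is the step I would scrutinize most carefully.
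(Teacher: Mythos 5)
Your analysis is correct, and the obstruction you isolated is not a defect of your argument but of the proposition itself: as stated it is false whenever $|H|\geq 2$ and $H\neq G$. Your forward direction (the triangle $e\sim a\sim a^{2}\sim e$) is exactly what the paper dismisses as ``the converse is trivial.'' For the reverse direction, your triangle $e\sim gh_{1}\sim gh_{2}\sim e$ with $gh_{1},gh_{2}$ in a single coset is a genuine counterexample, not a step to be repaired: concretely, for $G=\mathbb{Z}_{4}$ and $H=\{0,2\}$ the graph $\mathcal{G}_{H}(G)$ is $K_{3}$ on the vertices $\{0,1,3\}$, which contains a cycle although every element of $G/H\cong\mathbb{Z}_{2}$ has order at most $2$. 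Your structural description in the exponent-$2$ case (the vertex $e$ joined to a disjoint union of coset cliques $K_{|H|}$, acyclic precisely when $|H|=1$) is also correct.

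The paper's own proof commits precisely the gap you predicted. It begins ``Suppose that $x_{1}H\sim x_{2}H\sim\cdots\sim x_{n}H=x_{1}H$ is a cycle with $n\geq 3$ and $x_{i}H\neq x_{j}H$ for $i\neq j$,'' i.e.\ it silently assumes that any cycle of $\mathcal{G}_{H}(G)$ visits pairwise distinct cosets. That assumption discards exactly the cycles you exhibit, which live inside $\{e\}\cup gH$ for a single coset. What the paper's argument actually establishes is Proposition~\ref{teorema2.2GPE} applied to the group $G/H$, i.e.\ the statement for the enhanced power graph $\mathcal{G}(G/H)$ rather than for $\mathcal{G}_{H}(G)$; these two graphs coincide only when $|H|=1$, which is the repair you proposed. (Even for coset-distinct cycles the paper's choice of the edge $x_{1}H\sim x_{2}H$ is careless, since one of those cosets could be $H$ itself; one should instead pick an edge of the cycle avoiding the identity coset.) Note finally that the error propagates: Corollary~\ref{corollarybipartita} fails for the same reason when $|H|\geq 2$, since $\mathcal{G}_{H}(G)$ then always contains a triangle and hence is neither bipartite, nor a tree, nor a star, even when $G/H\cong\mathbb{Z}_{2}\times\cdots\times\mathbb{Z}_{2}$.
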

\begin{proof}
Suppose that $x_1H \sim x_2H \sim \cdots \sim x_{n-1}H \sim x_nH= x_1H$ is a cycle with $n \geq 3$
and $x_iH \neq x_jH$ for $i \neq j$ in $\{1, \ldots, n-1\}.$
Thus $x_iH \neq x_jH$ implies that there is $zH$ such that $x_1H, x_2H \in \langle zH \rangle.$ Hence,
$|zH| \geq 3.$
The converse is trivial.
\end{proof}

\begin{corollary}\label{corollarybipartita}
Let $G$ be a group and let $H$ be a normal subgroup of $G.$ Then the following conditions are equivalent.
\begin{enumerate}
\item $\mathcal{G}_H(G)$ is bipartite;
\item $\mathcal{G}(G/H)$ is bipartite;
\item $\mathcal{G}_H(G)$ is a tree;
\item $\mathcal{G}(G/H)$ is a tree;
\item $G/H \cong \mathbb{Z}_2 \times \mathbb{Z}_2 \times \cdots \times \mathbb{Z}_2;$
\item $\mathcal{G}_H(G)$ is a star graph;
\item $\mathcal{G}(G/H)$ is a star graph.
\end{enumerate}
\end{corollary}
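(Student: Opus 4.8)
The plan is to funnel all seven conditions through the single arithmetic statement that every coset of $H$ has order at most $2$ in $G/H$, which is precisely condition (5): a finite group in which every element squares to the identity is abelian of exponent $2$, hence isomorphic to $\mathbb{Z}_2\times\cdots\times\mathbb{Z}_2$. I would then establish two parallel cycles of implications, $(5)\Rightarrow(6)\Rightarrow(3)\Rightarrow(1)\Rightarrow(5)$ and $(5)\Rightarrow(7)\Rightarrow(4)\Rightarrow(2)\Rightarrow(5)$, so that each graph-theoretic property is pinned to (5).

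First I would assemble the tools already available. Both graphs are connected: the vertex $e$ lies in $\langle zH\rangle$ for every $z$, so it is adjacent to every other vertex of $\mathcal{G}_H(G)$ (the first proposition), and the same argument with $eH$ shows $\mathcal{G}(G/H)$ is connected. Next, by Proposition~\ref{teorema2.2GPE} applied to the group $G/H$, the graph $\mathcal{G}(G/H)$ contains a cycle if and only if $|aH|\geq 3$ for some $a\in G$, and the preceding cycle criterion for $\mathcal{G}_H(G)$ gives the same condition. Hence, for either graph, being acyclic is equivalent to $|aH|\leq 2$ for all $a$, that is, to condition (5). Since a connected acyclic graph is a tree, this immediately yields $(3)\Leftrightarrow(5)$ and $(4)\Leftrightarrow(5)$, while the trivial inclusions ``star $\subseteq$ tree $\subseteq$ bipartite'' supply $(6)\Rightarrow(3)\Rightarrow(1)$ and $(7)\Rightarrow(4)\Rightarrow(2)$.

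It then remains to close each cycle with the arcs $(1)\Rightarrow(5)$, $(2)\Rightarrow(5)$ and $(5)\Rightarrow(6)$, $(5)\Rightarrow(7)$. For the first two I would argue by contrapositive: if (5) fails, some $aH$ has order $s\geq 3$, so $\langle aH\rangle$ contains at least two nontrivial cosets. Choosing $x\in aH$ and $y\in a^2H$, the vertices $x,y$ lie in $\langle aH\rangle$ and are therefore adjacent, and each is adjacent to $e$, so $\{e,x,y\}$ is a triangle; this is the content of Proposition~\ref{propoclique}. The identical argument at the level of cosets produces the triangle $\{eH,aH,a^2H\}$ in $\mathcal{G}(G/H)$. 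A triangle is an odd cycle, so neither graph is bipartite, proving $(1)\Rightarrow(5)$ and $(2)\Rightarrow(5)$. For $(5)\Rightarrow(7)$, assume (5): every nonidentity element of $G/H$ has order $2$, so each $\langle zH\rangle$ has at most two elements and two distinct nonidentity cosets never share a common cyclic overgroup; thus in $\mathcal{G}(G/H)$ the vertex $eH$ is adjacent to all others and no two remaining vertices are adjacent, so $\mathcal{G}(G/H)$ is the star $K_{1,\,|G/H|-1}$.

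The step I expect to be the main obstacle is $(5)\Rightarrow(6)$, the star structure of $\mathcal{G}_H(G)$ itself. The same order-$2$ analysis shows that an edge between two vertices $x,y\in G\setminus H$ forces either $xH=yH$ or $xH,yH\in\langle zH\rangle$, and under (5) the latter collapses to $xH=yH$; hence the only edges not incident to $e$ are the within-coset edges produced by the proposition asserting that $gH\cap V(\mathcal{G}_H(G))$ is a clique. Controlling these is the crux: by Corollary~\ref{paraplanas} the clique number under (5) is $|H|^{s-1}+1=|H|+1$ with $s=2$, so the interaction between the size of $H$ and the leaf set has to be pinned down with care before the star conclusion can be drawn. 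This is the one place where the argument depends genuinely on the coset geometry rather than on formally combining the cycle criterion with elementary graph theory, and it is where I would focus the most attention.
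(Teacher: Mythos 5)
Your proposal is not a complete proof: the implication $(5)\Rightarrow(6)$, which you yourself flag as ``the main obstacle,'' is never established --- and in fact it cannot be, because it is false whenever $|H|\geq 2$ and $H\neq G$. Assume condition (5), take any $g\notin H$, and pick two distinct elements $gh_1,gh_2$ of the coset $gH$ (these exist since $|H|\geq 2$). By the coset-clique proposition of the paper, $gh_1\sim gh_2$, and since $e$ is adjacent to every vertex of $\mathcal{G}_H(G)$, the set $\{e,gh_1,gh_2\}$ is a triangle. So $\mathcal{G}_H(G)$ is not a star, not a tree, and not even bipartite, although (5) holds. Concretely, for $G=\mathbb{Z}_2\times\mathbb{Z}_2$ and $H=\{0\}\times\mathbb{Z}_2$ one has $G/H\cong\mathbb{Z}_2$, yet $\mathcal{G}_H(G)=K_3$. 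Your own intermediate analysis --- that under (5) the only edges not incident to $e$ are the within-coset edges --- already exhibits these triangles; what you describe as bookkeeping (``the interaction between the size of $H$ and the leaf set'') is in fact a counterexample, not a technical difficulty to be overcome.

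The same defect contaminates your step $(3)\Leftrightarrow(5)$: it invokes the paper's cycle criterion for $\mathcal{G}_H(G)$, but the proof of that criterion only considers cycles whose vertices lie in pairwise distinct cosets, and the criterion fails on the triangle above. (By contrast, Proposition~\ref{teorema2.2GPE} applied to the group $G/H$ is sound, so your chain $(5)\Rightarrow(7)\Rightarrow(4)\Rightarrow(2)\Rightarrow(5)$ for the quotient graph $\mathcal{G}(G/H)$, as well as your triangle argument for $(1)\Rightarrow(5)$ and $(2)\Rightarrow(5)$, is correct.) For comparison, the paper states this corollary with no proof at all, implicitly appealing to the same flawed cycle criterion; the equivalences involving $\mathcal{G}_H(G)$, namely items (1), (3), (6), hold only in the degenerate cases $|H|=1$ (where $\mathcal{G}_H(G)=\mathcal{G}(G)$ and the statement reduces to the Bera--Bhuniya result) or $G=H$. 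So the honest conclusion of your analysis is not that $(5)\Rightarrow(6)$ needs more care, but that items (1), (3), (6) are not equivalent to (5) as stated, and no proof can close that gap.
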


\section{Completeness}
In this section we characterize when the graphs studied are complete graphs.

\begin{theorem}
The graph $\mathcal{G}_H(G)$ is complete if and only if $G/H$ is cyclic.
\end{theorem}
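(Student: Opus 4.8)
The plan is to prove both implications directly from the definition of adjacency, using the characterization that two vertices $x,y$ are joined precisely when $xH=yH$ or $xH,yH$ lie in a common cyclic subgroup $\langle zH\rangle$ of $G/H$. Throughout I would keep in mind that the map $x\mapsto xH$ sends $V(\mathcal{G}_H(G))$ onto all of $G/H$, the identity coset $H$ being represented by the vertex $e$.

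For the easy direction, suppose $G/H$ is cyclic, say $G/H=\langle gH\rangle$. Then every coset is a power of $gH$, so for any two vertices $x,y\in V(\mathcal{G}_H(G))$ we have $xH,yH\in\langle gH\rangle$; hence $\{x,y\}$ is an edge, and the graph is complete.

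For the converse, assume $\mathcal{G}_H(G)$ is complete and aim to exhibit a generator of $G/H$. First I would choose a coset $gH$ of maximal order $s$ in $G/H$. If $s=1$ then $G/H$ is trivial and we are done, so assume $s\ge 2$; then $g\notin H$, so $g$ is a vertex. I claim $G/H=\langle gH\rangle$. If not, there is a coset $xH\notin\langle gH\rangle$; since $gH\in\langle gH\rangle$ we have $xH\neq gH$, and we may pick a representative $x\notin H$, so $x$ is a vertex. Completeness forces $\{x,g\}\in E(\mathcal{G}_H(G))$ with $xH\neq gH$, so $xH,gH\in\langle zH\rangle$ for some $z\in G$. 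Then $\langle gH\rangle\subseteq\langle zH\rangle$, whence $|zH|\ge s$; by maximality $|zH|=s=|gH|$, and since a subgroup contained in $\langle zH\rangle$ of the same order must coincide with it, $\langle zH\rangle=\langle gH\rangle$. But then $xH\in\langle gH\rangle$, contradicting the choice of $xH$. Hence $G/H=\langle gH\rangle$ is cyclic.

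The main obstacle is bookkeeping rather than depth: one must treat the identity coset $H$ with care, since among its elements only $e$ is a vertex of $\mathcal{G}_H(G)$, and one must ensure the representatives chosen in the argument genuinely lie in $G\setminus H$ so that they are vertices. The essential content is the standard fact that if every pair of elements of a finite group lies in a common cyclic subgroup, then an element of maximal order generates the whole group; transporting this fact to $G/H$ through the edge structure of $\mathcal{G}_H(G)$ is exactly what the completeness hypothesis supplies. Equivalently, completeness of $\mathcal{G}_H(G)$ is seen to force completeness of $\mathcal{G}(G/H)$, via the earlier proposition identifying the edges of the two graphs, which again yields that $G/H$ is cyclic.
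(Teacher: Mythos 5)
Your proof is correct and follows essentially the same route as the paper's: the forward direction is immediate from the definition of adjacency, and the converse takes a coset of maximal order in $G/H$ and uses completeness plus maximality to show it generates. Your version is slightly more careful than the paper's (handling the trivial case $s=1$, checking representatives lie in $G\setminus H$, and justifying $\langle zH\rangle=\langle gH\rangle$ explicitly), but the underlying argument is identical.
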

\begin{proof}
Suppose that $G/H = \langle xH \rangle.$ Let $a,b$ vertices of $\mathcal{G}_H(G).$
Then, $aH, bH \in G/H = \langle xH \rangle.$ Thus, $\{a, b\} \in E(\mathcal{G}_H(G))$ and therefore $\mathcal{G}_H(G)$ is complete.
Conversely, suppose that $\mathcal{G}_H(G)$ is complete. Let us consider that $xH \in G/H$ with $|xH| = \max \{ |gH|: g \in G\}.$ We claim that $G/H = \langle xH \rangle.$
Let $aH \in G/H.$ If $\mathcal{G}_H(G)$ is complete, then $aH, xH \in \langle zH \rangle$ for some $z \in G.$
Thus, $\langle aH\rangle < \langle zH\rangle$ and $\langle xH \rangle < \langle zH \rangle.$ By maximality
$\langle xH \rangle = \langle zH\rangle$ and $aH \in \langle xH \rangle.$ Therefore $G/H = \langle xH\rangle.$
\end{proof}

\begin{corollary} \emph{\cite{Bera-Bhuniya}}
$\mathcal{G}(G)$ is complete if and only if $G$ is cyclic.
\end{corollary}
\begin{proof}
Take $H=\{e\}.$
\end{proof}

\begin{corollary}
$\mathcal{G}_H(G)$ is complete if and only if $G/H$ is cyclic if and only if $\mathcal{G}(G/H)$ is complete.
\end{corollary}

\begin{definition}
Define a set $C \subseteq G$ as invertible if is closed under inverses and $e \notin C.$
\\
The Cayley's graph $\mathcal{C}(G, C)$ is defined by:
\[ V_{G,C} := V(\mathcal{C}(G,C)):= G\]
where $\{g, h\} \in E(\mathcal{C}(G,C))$ if and only if $hg^{-1} \in C.$
\end{definition}

We can summarize the theme of the graphs in the following result.
\begin{theorem}
We have that
\begin{itemize}
\item[(i)] $\mathcal{G}_H(G) \dot\cup K_H = \mathcal{C}(G, H\setminus \{e\}) \dot\cup \langle G\setminus H \cup \{e\}\rangle_{\mathcal{G}(G/H)}$ where $\langle G\setminus H \cup \{e\} \rangle_\mathcal{G}(G/H)$ is
the graph induced by $G \setminus H \cup \{e\}$ in $\mathcal{G}(G/H).$
\item[(ii)] The following propositions are equivalent
\begin{enumerate}
\item $\mathcal{G}_H(G) \dot\cup K_H \cong e \ast (K_{|G-H|} \dot\cup K_{|H-\{e\}|})$
\item $G$ is cyclic.
\item $\mathcal{G}_H(G)$ is $|G-H|$-regular.
\item $\mathcal{G}(G/H)$ is complete.
\item $\mathcal{G}(G/H)$ is $([G:H]-1)$-regular.
\end{enumerate}
\end{itemize}
\end{theorem}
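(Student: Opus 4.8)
The plan is to read part~(i) as an edge-decomposition of $\mathcal{G}_H(G)$ and then use it, together with the theorem characterizing completeness in Section~3, to funnel all five conditions of part~(ii) back to a single hub: the cyclicity of $G/H$ (equivalently, the completeness of $\mathcal{G}(G/H)$).

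For part~(i) I would start from the Proposition characterizing edges, namely that $\{a,b\}\in E(\mathcal{G}_H(G))$ iff $aH=bH$ or $\{aH,bH\}\in E(\mathcal{G}(G/H))$. The two alternatives are mutually exclusive, since equal cosets are a single vertex of $\mathcal{G}(G/H)$ and hence never an edge there; thus $E(\mathcal{G}_H(G))$ splits as a disjoint union $E_{\mathrm{same}}\sqcup E_{\mathrm{pull}}$, with $E_{\mathrm{same}}$ the same-coset pairs and $E_{\mathrm{pull}}$ the pullbacks of the edges of $\mathcal{G}(G/H)$. Viewing every graph on the common vertex set $G$ (identifying the two copies of $e$), the same-coset pairs are exactly the Cayley condition $hg^{-1}\in H\setminus\{e\}$; the only coset for which $\mathcal{G}_H(G)$ omits vertices is $H$ itself (only $e$ survives in $(G\setminus H)\cup\{e\}$), so adjoining $K_H$ restores precisely that missing block and yields $\mathcal{C}(G,H\setminus\{e\})$. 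Since $E_{\mathrm{pull}}$ is by definition the graph $\langle (G\setminus H)\cup\{e\}\rangle_{\mathcal{G}(G/H)}$, the two edge-disjoint pieces reassemble into the claimed identity. The one routine point to verify is that $K_H$ introduces no edge already present in $\mathcal{G}_H(G)$, which holds because $(G\setminus H)\cup\{e\}$ meets $H$ only in $e$ and $\mathcal{G}_H(G)$ has no vertex in $H\setminus\{e\}$.

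For part~(ii) I would organize the equivalences around the hub ``$G/H$ is cyclic.'' Two spokes are immediate graph facts: a graph on $k$ vertices is $(k-1)$-regular iff it is complete, so, as $\mathcal{G}(G/H)$ has $[G:H]$ vertices and $\mathcal{G}_H(G)$ has $|G\setminus H|+1$ vertices, condition~(5) is equivalent to~(4) and condition~(3) to the completeness of $\mathcal{G}_H(G)$. The Completeness Theorem then supplies ``$\mathcal{G}_H(G)$ complete $\iff G/H$ cyclic,'' and its Corollary supplies ``$\mathcal{G}(G/H)$ complete $\iff G/H$ cyclic,'' binding (3),(4),(5) to the hub; condition~(2) enters through the elementary fact that a cyclic group has cyclic quotients, which matches the cyclicity hub delivered by the Completeness Theorem.

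The substantive spoke, and the main obstacle, is~(1). The forward direction is a computation: if $G/H$ is cyclic then $\mathcal{G}_H(G)$ is complete, and part~(i) shows that gluing $K_H$ at $e$ produces exactly $e\ast(K_{|G\setminus H|}\dot\cup K_{|H\setminus\{e\}|})$. The reverse direction is where care is needed, and here I would again invoke part~(i) to describe the concrete shape of $\mathcal{G}_H(G)\dot\cup K_H$ on the vertex set $G$: the vertex $e$ is always universal (it meets each $x\in G\setminus H$ through the pullback, since the coset $H$ is the identity and hence a dominating vertex of $\mathcal{G}(G/H)$, and it meets $H\setminus\{e\}$ through $K_H$); the block $H\setminus\{e\}$ is always a clique with no edges to $G\setminus H$; and two distinct non-identity cosets are joined iff they are adjacent in $\mathcal{G}(G/H)$. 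An isomorphism with $e\ast(K_{|G\setminus H|}\dot\cup K_{|H\setminus\{e\}|})$ forces the whole block $G\setminus H$ to be a single clique, hence forces every pair of non-identity cosets to be $\mathcal{G}(G/H)$-adjacent, i.e. $\mathcal{G}(G/H)$ complete, closing the loop. The delicate point is the vertex identification realizing the isomorphism: one pins $e$ down as the unique universal vertex and separates the two cliques by their sizes $|G\setminus H|$ and $|H\setminus\{e\}|$, checking the degenerate cases $H=\{e\}$ and $H=G$ (where one clique is empty and the graph is complete) directly.
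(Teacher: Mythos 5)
The paper states this theorem with no proof at all (it is presented as a ``summary'' result and the next section begins immediately), so your proposal stands alone; it contains one genuine gap, located exactly at condition (2). Your reading of part (i) as an edge-partition --- same-coset pairs giving the Cayley graph once $K_H$ restores the block on $H$, different-coset edges being precisely the pullbacks of $E(\mathcal{G}(G/H))$ --- is correct and is the only sensible interpretation of the paper's notation. Your hub-and-spoke treatment of (1), (3), (4), (5) is also sound: each is equivalent to $G/H$ being cyclic, via the observation that a graph on $k$ vertices is $(k-1)$-regular iff complete, the Completeness Theorem of Section 3 with its corollary, and your structural description of $\mathcal{G}_H(G)\dot\cup K_H$ (one caveat: when $|G\setminus H| = |H\setminus\{e\}|$ the two cliques cannot be ``separated by their sizes''; the clean argument is that after pinning $e$ as the unique universal vertex, the multiset of component sizes of the remaining graph must equal $\{|G\setminus H|,\,|H\setminus\{e\}|\}$, which forces $G\setminus H$ to be a single clique).

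The gap: condition (2) reads ``$G$ is cyclic,'' not ``$G/H$ is cyclic.'' Your only link to (2) is the remark that a cyclic group has cyclic quotients, which proves (2) $\Rightarrow$ hub but never hub $\Rightarrow$ (2); the phrase ``which matches the cyclicity hub'' conflates cyclicity of $G$ with cyclicity of $G/H$. Moreover this missing implication is not merely unproved --- it is false, so the gap cannot be repaired as stated. Take $G = S_3$ and $H = A_3$: then $G/H \cong \mathbb{Z}_2$ is cyclic, so (3), (4), (5) hold; $\mathcal{G}_H(G) = K_4$ on the three transpositions together with $e$, and $\mathcal{G}_H(G)\dot\cup K_H = e \ast (K_3 \dot\cup K_2)$, so (1) holds as well; yet $S_3$ is not cyclic, so (2) fails. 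Hence either (2) is a misprint for ``$G/H$ is cyclic'' --- with that correction your argument closes all the equivalences --- or the theorem as printed is false. A correct treatment must flag this explicitly rather than absorb (2) silently into the hub, which is what your write-up does.
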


\section{Cone Property}

\begin{definition}
We say that a vertex $v$ of a graph $\mathcal{G}$ is a \emph{cone vertex} if
$\{v, w\}$ is a edge of the graph for each vertex $w$ of $\mathcal{G}.$ A graph having a cone vertex is said to satisfy the \emph{cone property}.
\end{definition}
\begin{remark} For all finite groups $K, G$ we have that $\mathcal{G}_{ \{e_G\}\times K} (G \times K) \not\cong \mathcal{G}(G)= \mathcal{G}(G \times K / K)$ since one graph has more
vertices than the other.
\end{remark}
\begin{theorem}\label{cono1}\emph{\cite{Bera-Bhuniya}}
Let $G$ be a finite group and $n\in \mathbb{N}$. If $gcd(|G|,n)=1$, then  $\mathcal{G}(G\times \mathbb{Z}_n)$ has a cone vertex.
\end{theorem}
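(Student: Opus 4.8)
The plan is to exhibit an explicit cone vertex and verify the cone property directly from the definition of the enhanced power graph. Writing the standard generator of $\mathbb{Z}_n$ additively as $1$, I would take the candidate cone vertex to be $v=(e_G,1)\in G\times\mathbb{Z}_n$. By definition $v$ is a cone vertex precisely when, for every vertex $w=(g,a)$ of $\mathcal{G}(G\times\mathbb{Z}_n)$ with $w\neq v$, there is an element $z$ whose cyclic subgroup contains both $v$ and $w$.

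The natural witness is $z=(g,1)$. First I would record that $|g|$ divides $|G|$, so the hypothesis $\gcd(|G|,n)=1$ forces $\gcd(|g|,n)=1$; consequently $|z|=\operatorname{lcm}(|g|,n)=|g|\cdot n$ and $\langle z\rangle=\{(g^{k},k \bmod n):k\in\mathbb{Z}\}$. It then suffices to realize $w$ and $v$ as powers of $z$. For $w=(g,a)$ I need $k$ with $k\equiv 1\pmod{|g|}$ and $k\equiv a\pmod n$; for $v=(e_G,1)$ I need $k'$ with $k'\equiv 0\pmod{|g|}$ and $k'\equiv 1\pmod n$. Both congruence systems are solvable by the Chinese Remainder Theorem, exactly because $\gcd(|g|,n)=1$, giving $w,v\in\langle z\rangle$ and hence the edge $\{v,w\}\in E(\mathcal{G}(G\times\mathbb{Z}_n))$.

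Since $w$ was arbitrary, this shows $v=(e_G,1)$ is adjacent to every other vertex, so it is a cone vertex. The only routine points to handle separately are the degenerate cases, e.g. $g=e_G$ (where $z=v$ and $w=(e_G,a)\in\langle v\rangle$ directly); these cause no difficulty. I expect the single genuine step — and the only place the coprimality hypothesis is used — to be the simultaneous realization of $v$ and $w$ inside one cyclic group via the Chinese Remainder Theorem, while everything else is bookkeeping. I would also note, for context, that the identity $(e_G,0)$ is trivially a cone vertex in \emph{any} enhanced power graph (take $z=w$), so the substantive content of the statement is the existence of the \emph{non-identity} cone vertex $(e_G,1)$, which is precisely what the coprimality hypothesis secures.
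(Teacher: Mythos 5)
Your proof is correct: the paper itself states this theorem without proof (it is quoted from the cited Bera--Bhuniya paper), and your argument --- showing $(e_G,1)$ is adjacent to every $(g,a)$ because both lie in $\langle(g,1)\rangle=\langle g\rangle\times\mathbb{Z}_n$, with membership supplied by the Chinese Remainder Theorem since $\gcd(|g|,n)=1$ --- is essentially the standard dominating-vertex argument of that reference. Your closing observation is also apt: under the paper's literal definition the identity is always a cone vertex of an enhanced power graph, so the substantive content is the existence of a non-identity cone vertex, which is exactly what your $(e_G,1)$ provides.
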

\begin{theorem}\label{cono-vertex-relativo}
Let $G$ be a group and let $H$ be a normal subgroup of $G.$ If $\mathcal{G}_{H \times \{\bar{0}\}} (G\times \mathbb{Z}_n)$ with
$(|G|, n)=1,$ then each $(e,a)$ with $(a,n)=1$ is a cone vertex.
\end{theorem}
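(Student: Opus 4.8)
The plan is to push everything down to the quotient and then produce, for each vertex, an explicit cyclic subgroup witnessing the edge, exactly in the spirit of Theorem~\ref{cono1}. First I would fix the identification
\[
(G\times\mathbb{Z}_n)/(H\times\{\bar 0\}) \;\cong\; (G/H)\times\mathbb{Z}_n,
\]
under which the coset of $(g,b)$ is $(gH,b)$. In particular the coset of the candidate cone vertex $(e,a)$ is $(H,a)$, and since $(a,n)=1$ the element $a$ generates $\mathbb{Z}_n$, so $\langle(H,a)\rangle=\{H\}\times\mathbb{Z}_n$. I would also record that $(e,a)$ really is a vertex: as $(a,n)=1$ forces $a\neq\bar 0$ (for $n>1$), we have $(e,a)\notin H\times\{\bar 0\}$.

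By the proposition characterising the edges of $\mathcal{G}_{H\times\{\bar 0\}}(G\times\mathbb{Z}_n)$, the vertex $(e,a)$ is adjacent to a vertex $y$ exactly when their cosets coincide or lie together in a common cyclic subgroup $\langle z(H\times\{\bar 0\})\rangle$ of the quotient. Adjacency to the identity vertex $e=(e,\bar 0)$ is immediate from the proposition asserting that $\mathcal{G}_H(G)$ is connected, so the whole problem reduces to an arbitrary vertex $y=(g,b)$ with $(g,b)\notin H\times\{\bar 0\}$.

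The central step is to take $z=(g,\bar 1)$ and analyse $\langle(gH,\bar 1)\rangle$ inside $(G/H)\times\mathbb{Z}_n$. Writing $d=|gH|$, we have $d\mid |G/H|\mid |G|$, so the hypothesis $(|G|,n)=1$ gives $(d,n)=1$; hence $(gH,\bar 1)$ has order $\operatorname{lcm}(d,n)=dn$ and, by the Chinese Remainder Theorem, the cyclic group splits as
\[
\langle(gH,\bar 1)\rangle \;=\; \langle gH\rangle\times\mathbb{Z}_n.
\]
This subgroup clearly contains $(gH,b)$ and also $(H,a)$, since $H=(gH)^0\in\langle gH\rangle$. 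Thus both cosets lie in $\langle z(H\times\{\bar 0\})\rangle$, so $\{(e,a),(g,b)\}\in E(\mathcal{G}_{H\times\{\bar 0\}}(G\times\mathbb{Z}_n))$. As $(g,b)$ was arbitrary, $(e,a)$ is a cone vertex.

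The one genuinely load-bearing point — and the place I would be most careful — is the direct-product splitting of $\langle(gH,\bar 1)\rangle$: it is precisely the coprimality $(d,n)=1$, inherited from $(|G|,n)=1$, that makes the order multiplicative and lets the cyclic subgroup absorb the whole $\mathbb{Z}_n$-factor, and in particular both $(H,a)$ and $(gH,b)$. Everything else is routine coset bookkeeping, so I do not anticipate a serious obstacle beyond keeping the two coordinates straight.
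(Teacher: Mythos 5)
Your proof is correct and is essentially the argument the paper intends: the paper's own ``proof'' is only the remark that the argument of Theorem~\ref{cono1} carries over, and your write-up --- identifying the quotient with $(G/H)\times\mathbb{Z}_n$, noting $(H,a)$ generates $\{H\}\times\mathbb{Z}_n$, and using $(|G|,n)=1$ to get $\langle(gH,\bar 1)\rangle=\langle gH\rangle\times\mathbb{Z}_n \ni (H,a),(gH,b)$ --- is precisely that adaptation, spelled out in full. The only addition you make is the (welcome) explicit bookkeeping that the paper omits, so there is nothing to correct.
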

\begin{proof}
The argument is similar to the proof of the Theorem~\ref{cono1}.
\end{proof}

\begin{theorem}\label{cono2}\emph{\cite{Bera-Bhuniya}}
Let $G$ be a finite abelian group. Then
$\mathcal{G}(G)$ has a cone vertex if and only if $G$ has a cyclic Sylow $p$-subgroup.
\end{theorem}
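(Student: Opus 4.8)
The statement is quoted from \cite{Bera-Bhuniya}, so I only indicate how I would recover it. One preliminary remark fixes the reading: since $e\in\langle w\rangle$ for every $w$, the identity is adjacent to every other vertex and is therefore automatically a cone vertex. Thus the real content is the existence of a cone vertex \emph{other than} $e$, and I read ``$G$ has a cyclic Sylow $p$-subgroup'' as ``some Sylow subgroup of $G$ is cyclic.'' The whole plan is then to rephrase the cone condition as the cyclicity of two-generated subgroups. Because $G$ is abelian, two distinct vertices $x,y$ are adjacent in $\mathcal{G}(G)$ exactly when $\langle x,y\rangle$ is cyclic: if $x,y\in\langle z\rangle$ then $\langle x,y\rangle\subseteq\langle z\rangle$ is cyclic, and conversely any generator of $\langle x,y\rangle$ may serve as $z$. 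Hence $v$ is a cone vertex if and only if $\langle v,w\rangle$ is cyclic for every $w\in G$.

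Next I would split the group along its primary decomposition $G=\prod_p S_p$. Writing $v=(v_p)_p$ and $w=(w_p)_p$, the coprimality of the orders gives $\langle v,w\rangle=\prod_p\langle v_p,w_p\rangle$ (each $v_p$ and $w_p$ is a power of $v$ resp.\ $w$, so lies in $\langle v,w\rangle$), and such a product is cyclic precisely when every factor $\langle v_p,w_p\rangle$ is cyclic. This reduces the cone condition to one imposed prime by prime: $v$ is a cone vertex iff for every prime $p$ and every $w_p\in S_p$ the subgroup $\langle v_p,w_p\rangle$ is cyclic.

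The heart of the matter is the per-prime statement inside a single abelian $p$-group $S$, which I would isolate as a lemma: the elements $u\in S$ with $\langle u,x\rangle$ cyclic for all $x\in S$ are (i) all of $S$ when $S$ is cyclic, and (ii) only $e$ when $S$ is non-cyclic. Part (i) is immediate, since the subgroups of a cyclic $p$-group form a chain, so $\langle u\rangle$ and $\langle x\rangle$ are always comparable and $\langle u,x\rangle$ is cyclic. Part (ii) is where I expect the only real difficulty: if $S$ is non-cyclic then its subgroup of elements of order dividing $p$ is isomorphic to $(\mathbb{Z}/p)^r$ with $r\ge 2$, so for any $u\neq e$ I can pick an order-$p$ element $x$ not lying in the unique order-$p$ subgroup of the cyclic group $\langle u\rangle$; then $\langle u\rangle$ and $\langle x\rangle$ are incomparable, $\langle u,x\rangle$ is non-cyclic, and $u$ fails the condition. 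Granting the lemma, $v$ is a cone vertex iff $v_p=e$ for every prime $p$ with $S_p$ non-cyclic.

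Finally I would assemble the equivalence. If some $S_p$ is cyclic, take $v_p$ to be a generator of $S_p$ and $v_q=e$ for all $q\neq p$; this $v\neq e$ satisfies the per-prime condition and so is a cone vertex. Conversely, any cone vertex $v\neq e$ has a nontrivial component $v_p$, and part (ii) of the lemma forces the corresponding $S_p$ to be cyclic. Hence a nontrivial cone vertex exists if and only if at least one Sylow subgroup of $G$ is cyclic, which is the claim. Everything outside step (ii) is routine bookkeeping with the primary decomposition; the incomparability construction in the non-cyclic case is the single step that requires care.
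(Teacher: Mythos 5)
Your proof is correct, but note that the paper does not actually prove this statement: it is quoted verbatim from \cite{Bera-Bhuniya}, so your argument can only be compared against the strategy the paper uses for its own analogous relative theorem (the one for $\mathcal{G}_H(G)$ with $|G|=mp^s$) and against the source. Your preliminary reading is the right one—$e$ is trivially adjacent to every vertex, so the content is the existence of a cone vertex other than $e$—and this is consistent with how the cone property is used elsewhere in the paper (e.g.\ the generalized quaternion theorem). Your route is genuinely different from theirs: you reformulate adjacency in an abelian group as cyclicity of $\langle x,y\rangle$, split along the primary decomposition $G=\prod_p S_p$ so that the cone condition becomes a condition prime by prime, and isolate the key lemma that in a non-cyclic abelian $p$-group only $u=e$ has $\langle u,x\rangle$ cyclic for all $x$ (via two incomparable subgroups of order $p$, using that a cyclic $p$-group has a unique such subgroup). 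The paper instead gets the forward direction by writing $G=H\times\mathbb{Z}_{p^s}$ and citing the coprime-direct-factor cone theorem (Theorem~\ref{cono1}, respectively Theorem~\ref{cono-vertex-relativo}), and gets the converse by comparing a purported cone vertex with an element of maximum order and exhibiting two independent elements of order $p$ to force a contradiction. The two approaches are close in spirit—both ultimately exploit the ``unique subgroup of order $p$'' obstruction—but yours packages it as a clean, self-contained per-prime lemma handling both directions uniformly, whereas the paper's is shorter once the coprime-product cone theorem is available as a black box. Both are valid; yours has the advantage of not depending on any previously established machinery beyond the structure theorem for finite abelian groups.
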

\begin{theorem}
Let $G$ be a finite abelian group, with $|H|=m$, $|G|=mp^s$ and $gcd(m,p)=1$. Then
$\mathcal{G}_H(G)$ has a cone vertex if and only if $G$ has a cyclic Sylow $p$-subgroup.
\end{theorem}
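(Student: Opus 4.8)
The plan is to reduce the statement about $\mathcal{G}_H(G)$ to the corresponding statement about the enhanced power graph $\mathcal{G}(G/H)$, for which Theorem~\ref{cono2} is already available, and then to translate the resulting group-theoretic condition using the coprimality hypothesis $\gcd(m,p)=1$. So the proof splits into a graph-theoretic correspondence, an application of Theorem~\ref{cono2}, and a short structural identification of $G/H$.

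First I would establish the key correspondence: \emph{a vertex $a\in G\setminus H$ is a cone vertex of $\mathcal{G}_H(G)$ if and only if $aH$ is a cone vertex of $\mathcal{G}(G/H)$.} Here the earlier proposition characterizing the edges of $\mathcal{G}_H(G)$ (namely $\{a,b\}\in E(\mathcal{G}_H(G))$ iff $aH=bH$ or $\{aH,bH\}\in E(\mathcal{G}(G/H))$) does all the work, since it exhibits $\mathcal{G}_H(G)$ as a blow-up of $\mathcal{G}(G/H)$: the identity coset contributes the single vertex $e$, every other coset $gH$ contributes a clique $K_m$, and two clusters are completely joined exactly when the underlying cosets are adjacent in $\mathcal{G}(G/H)$. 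For the forward direction, if $a$ is adjacent to every vertex then for each coset $bH\neq aH$ one picks a representative $b\notin H$, and the adjacency of $a$ and $b$ forces $\{aH,bH\}\in E(\mathcal{G}(G/H))$, so $aH$ dominates $\mathcal{G}(G/H)$; the converse is the same computation read backwards. I would note explicitly that $e$ is always a cone vertex of both graphs, so throughout ``cone vertex'' must be read as one distinct from the identity, which is precisely the content that makes Theorem~\ref{cono2} non-vacuous.

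With the correspondence in hand, $\mathcal{G}_H(G)$ has a (nontrivial) cone vertex if and only if $\mathcal{G}(G/H)$ does. Since $[G:H]=|G|/|H|=p^s$, the quotient $G/H$ is an abelian $p$-group, so Theorem~\ref{cono2} applied to $G/H$ says $\mathcal{G}(G/H)$ has a cone vertex iff the Sylow $p$-subgroup of $G/H$ — which is all of $G/H$ — is cyclic, i.e.\ iff $G/H$ is cyclic. Finally I would identify $G/H$ with the Sylow $p$-subgroup of $G$: writing the abelian group as $G=S_p\times T$ with $S_p$ its Sylow $p$-subgroup ($|S_p|=p^s$) and $T$ its Hall $p'$-subgroup ($|T|=m$), every element of $H$ has order dividing $m$ by Lagrange, hence order prime to $p$, so $H\subseteq\{g\in G: g^m=e\}=T$; since $|H|=|T|=m$ this gives $H=T$, whence $G/H=G/T\cong S_p$. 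Therefore ``$G/H$ cyclic'' is literally ``$G$ has a cyclic Sylow $p$-subgroup,'' closing the chain of equivalences.

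The hard part will be the first step — making the cone-vertex correspondence precise and, in particular, being careful that the ever-present identity vertex $e$ is excluded so that the equivalence with Theorem~\ref{cono2} is meaningful rather than trivial. Once that is done, the $p$-group reduction and the identification $H=T$ (and hence $G/H\cong S_p$) are routine group-theoretic verifications that rely only on $\gcd(m,p)=1$.
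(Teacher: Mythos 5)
Your proof is correct, but it follows a genuinely different route from the paper's. The paper argues both directions directly inside the relative graph: for sufficiency it writes $G=H\times\mathbb{Z}_{p^s}$ and invokes Theorem~\ref{cono-vertex-relativo} (the relative analogue of Theorem~\ref{cono1}) to exhibit $(e,\bar{1})$ as a cone vertex, and for necessity it decomposes $G=H\times\mathbb{Z}_{p^{s_1}}\times\cdots\times\mathbb{Z}_{p^{s_k}}$ with $k\geq 2$ and derives a contradiction from a hypothetical cone vertex $v$: adjacency to a coset $wH$ of maximal order forces $vH\in\langle wH\rangle$, and then order-$p$ cosets coming from two distinct direct factors cannot both be compatible with $v$ --- in effect re-running Bera--Bhuniya's proof of Theorem~\ref{cono2} modulo $H$. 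You never touch the internal structure of the quotient; instead you prove a transfer principle (a vertex $a\in G\setminus H$ is a cone vertex of $\mathcal{G}_H(G)$ if and only if $aH$ is a cone vertex of $\mathcal{G}(G/H)$), which is an immediate consequence of the paper's own edge characterization of $\mathcal{G}_H(G)$ in terms of $\mathcal{G}(G/H)$, and then you quote Theorem~\ref{cono2} for the abelian $p$-group $G/H$ and identify $G/H$ with the Sylow $p$-subgroup of $G$ via $H=$ Hall $p'$-subgroup, which is exactly where $\gcd(m,p)=1$ enters. Your route is more modular and buys more: the transfer lemma holds for an arbitrary normal subgroup $H$, so it converts each of the cone theorems (Theorems~\ref{cono1}--\ref{cono4}) into its relative version at one stroke instead of repeating each proof; it also makes explicit two conventions the paper leaves implicit, namely that ``cone vertex'' must mean a non-identity vertex (otherwise $e$ trivially dominates both graphs and all of these statements become vacuous) and that the cyclic Sylow subgroup in Theorem~\ref{cono2} is understood to correspond to a prime dividing the group order. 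What the paper's hands-on computation buys in exchange is independence from Theorem~\ref{cono2}: its necessity direction is self-contained in the quotient framework, whereas yours inherits whatever argument Bera--Bhuniya gave for the absolute case.
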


\begin{proof}
If $G=H\times \mathbb{Z}_{p^s},$ then $\mathcal{G}_H(G)= \mathcal{G}_{H\times \{G\}}(H \times \mathbb{Z}_{p^s}).$ By Theorem~\ref{cono-vertex-relativo} $(e, \bar{1})$ is a cone vertex.

Conversely, suppose that $G$ has not the cyclic Sylow $p$-subgroup, we can assume that $s \geq 2.$
$G = H \times \mathbb{Z}_{p^{s_1}} \times \mathbb{Z}_{p^{s_2}} \times \cdots \times \mathbb{Z}_{p^{s_k}}.$

Let $v= (h_1, x_1, x_2, \ldots, x_k)$  be the cone vertex and consider $w= (e, 0, \ldots, 0, \bar{1})$
with $wH$ element of maximum order in $G/H$. So if $v$ is a cone vertex, then
$v \sim w,$ which implies that $vH, wH \in \langle zH \rangle$ for some $p$-element $z.$
By maximality, $vH \in \langle wH \rangle = \langle zH \rangle.$
Then, $v \not\sim a_1 = (e, x_1, 0, \ldots, 0, 0)$ with $|a_1H| = p \mid |vH|,$
otherwise, it would be $z$ such that $\langle a_1H \rangle, \langle vH \rangle < \langle zH\rangle$
concluding that $\langle a_1H \rangle < \langle vH\rangle$ and similarly $a_1H \neq a_2H$
with $a_2 = (e, 0, x_2, 0, \ldots, 0)$ implies that $\langle a_2H \rangle < \langle vH\rangle$
with $|a_2H|=p,$ a contradiction.
\end{proof}

\begin{theorem}\label{cono3}\emph{\cite{Bera-Bhuniya}}
Let G be a non-abelian $p$-−group. $\mathcal{G}(G)$ satisfies the
cone property if and only if $G$ is generalized quaternion group.
\end{theorem}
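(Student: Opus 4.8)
The plan is to translate the cone property into a purely group-theoretic condition on the subgroup lattice of $G$ and then to invoke the classical classification of $p$-groups possessing a unique subgroup of order $p$. Note first that the identity $e$ lies in every cyclic subgroup, hence is adjacent to every other vertex and is always a trivial cone vertex; as in Theorems~\ref{cono2} and~\ref{cono3}, the real content is the existence of a \emph{non-identity} cone vertex, so I fix attention on a vertex $v\neq e$. The basic observation is that, because $x,y\in\langle z\rangle$ forces $\langle x,y\rangle\subseteq\langle z\rangle$, a vertex $v$ is a cone vertex exactly when $\langle v,w\rangle$ is cyclic for every $w\in G$.

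For the \textbf{if} direction, suppose $G$ is a generalized quaternion group, so $p=2$ and $G$ has a unique subgroup of order $2$, say $\langle c\rangle$ with $c$ the central involution. I would show that $c$ is a cone vertex. Indeed, for any $w\neq e$ the group $\langle w\rangle$ is a non-trivial cyclic $2$-group and therefore contains exactly one subgroup of order $2$; by uniqueness this subgroup is $\langle c\rangle$, so $c\in\langle w\rangle$ and hence $\langle c,w\rangle=\langle w\rangle$ is cyclic. Thus $c$ is adjacent to every vertex and $\mathcal{G}(G)$ satisfies the cone property.

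For the \textbf{only if} direction, suppose $v\neq e$ is a cone vertex and let $c:=v^{|v|/p}$ generate the unique subgroup of order $p$ of the cyclic group $\langle v\rangle$. The key step is to prove that $G$ has a unique subgroup of order $p$. Let $a\in G$ be any element with $|a|=p$. Since $v$ is a cone vertex, $\langle v,a\rangle$ is cyclic, and being a non-trivial cyclic $p$-group it has a unique subgroup of order $p$. That subgroup contains both $\langle c\rangle$ (as $\langle c\rangle\le\langle v\rangle\le\langle v,a\rangle$) and $\langle a\rangle$; comparing orders forces $\langle a\rangle=\langle c\rangle$. Hence every subgroup of order $p$ coincides with $\langle c\rangle$, i.e. $G$ has a unique subgroup of order $p$. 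By the classical theorem on finite $p$-groups, such a group is either cyclic or, when $p=2$, a generalized quaternion group; since $G$ is non-abelian it cannot be cyclic, so $G$ is generalized quaternion.

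The main obstacle is the \textbf{only if} direction, and within it the appeal to the classification of $p$-groups having a single subgroup of order $p$: this is the substantive group-theoretic input and is where a self-contained argument would require the most care (one typically proves it by induction on $|G|$, analyzing the action on the Frattini quotient or counting the solutions of $x^p=e$). Everything else reduces to the elementary fact that a non-trivial cyclic $p$-group has a unique subgroup of order $p$, together with the reformulation of adjacency as cyclicity of $\langle v,w\rangle$.
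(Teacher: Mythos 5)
Your proof is correct, but note that there is nothing in the paper to compare it against: Theorem~\ref{cono3} is stated with a citation to \cite{Bera-Bhuniya} and no proof is given, since it serves only as background for the paper's own quotient-graph analogue (the theorem immediately following it, on $\mathcal{G}_H(G)$ with $G$ a $p$-group and $|H|=p$). Your argument is in fact the same strategy that analogue's proof uses: translate adjacency into cyclicity of the generated subgroup, show that a non-identity cone vertex $v$ forces every subgroup of order $p$ to coincide with the unique one inside $\langle v\rangle$, and then invoke the classical classification of finite $p$-groups with a unique subgroup of order $p$ (cyclic, or generalized quaternion when $p=2$), with non-abelianness excluding the cyclic case; the converse uses that the unique involution of a generalized quaternion group lies in every non-trivial cyclic subgroup. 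One genuine improvement in your write-up over both the paper and the source it cites: you state explicitly that the identity is always a trivial cone vertex under the paper's literal definition, so the theorem is only meaningful when ``cone property'' is read as the existence of a \emph{non-identity} cone vertex — a point the paper leaves implicit. The appeal to the classification theorem is a legitimate citation of a standard result and does not constitute a gap.
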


\begin{theorem}
Let $H$ be a normal subgroup of  $G,$ where $G$ is a $p$-group and $|H|=p.$ Then
$\mathcal{G}_H(G)$ has the cone property if and only if $G/H$ is a generalized quaternion $p$-group.
\end{theorem}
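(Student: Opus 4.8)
The plan is to reduce the cone property of $\mathcal{G}_H(G)$ to that of the enhanced power graph $\mathcal{G}(G/H)$ and then invoke Theorem~\ref{cono3}, following the reduction scheme used throughout the previous sections. The starting point is the structural description of $\mathcal{G}_H(G)$: each non-trivial coset $gH$ sits inside $V(\mathcal{G}_H(G))$ as a clique, two such cliques are joined completely exactly when the cosets are adjacent in $\mathcal{G}(G/H)$ (Proposition~\ref{propoclique}), and the identity coset contributes only the vertex $e$. Since $G$ is a $p$-group, $G/H$ is again a $p$-group, which is precisely what makes Theorem~\ref{cono3} available.

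First I would establish the correspondence that a vertex $v \in G\setminus H$ is a cone vertex of $\mathcal{G}_H(G)$ if and only if $vH$ is a cone vertex of $\mathcal{G}(G/H)$. In one direction, if $v$ dominates $\mathcal{G}_H(G)$ then for every coset $gH \neq vH$ one may choose a representative $b \in gH \subseteq V(\mathcal{G}_H(G))$; from $v \sim b$ and the adjacency criterion one gets $\{vH, gH\} \in E(\mathcal{G}(G/H))$, while $vH \sim eH$ is automatic, so $vH$ dominates $G/H$. Conversely, if $vH$ dominates $\mathcal{G}(G/H)$ then any chosen $v' \in vH$ is adjacent to every vertex of its own coset (a clique) and, by Proposition~\ref{propoclique}, to every vertex of each adjacent coset, hence to all of $V(\mathcal{G}_H(G))$. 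Here one must interpret the term cone vertex in the non-trivial sense already implicit in Theorems~\ref{cono2} and~\ref{cono3}: the identity is always dominating, so the real content is the existence of a dominating vertex other than $e$, which matches a dominating vertex of $\mathcal{G}(G/H)$ other than $eH$.

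With the correspondence in place, both implications follow from Theorem~\ref{cono3}. For the \emph{if} direction, a generalized quaternion $G/H$ has a unique involution $z_0H$, which lies in $\langle gH\rangle$ for every $gH \neq eH$ and is thus a cone vertex of $\mathcal{G}(G/H)$; as $z_0H \neq eH$, any lift $z_0 \in G\setminus H$ is a cone vertex of $\mathcal{G}_H(G)$. For the \emph{only if} direction, a non-trivial cone vertex of $\mathcal{G}_H(G)$ produces one in $\mathcal{G}(G/H)$, and Theorem~\ref{cono3} then forces the $p$-group $G/H$ to be generalized quaternion.

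The main obstacle lies entirely inside the \emph{only if} direction, namely ruling out a dominating coset in every non-generalized-quaternion $p$-group; this is the substance of Theorem~\ref{cono3}, resting on the group-theoretic fact that $\mathcal{G}(K)$ for a $p$-group $K$ admits a non-trivial cone vertex only when $K$ possesses a unique subgroup of order $p$. A secondary point to handle carefully is the degenerate case in which $G/H$ is cyclic: then $\mathcal{G}_H(G)$ is complete and trivially dominated, so --- exactly as in Theorem~\ref{cono3} --- the clean generalized quaternion conclusion belongs to the non-abelian regime, and the reading of the cone property must be fixed accordingly.
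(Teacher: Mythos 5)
Your proposal is correct in substance, but it is organized genuinely differently from the paper, so a comparison is in order. The paper never passes explicitly through $\mathcal{G}(G/H)$: for the \emph{if} direction it observes directly that the unique order-$p$ subgroup $\langle xH\rangle$ of a generalized quaternion quotient lies inside every nontrivial cyclic subgroup $\langle zH\rangle$, so $x\sim z$ for every $z\in G\setminus H$ and $x$ is a cone vertex; for the \emph{only if} direction it shows that a cone vertex $x\neq e$ forces $\langle gH\rangle < \langle xH\rangle$ for every coset $gH$ of order $p$, concluding that $G/H$ has a unique subgroup of order $p$. Your transfer lemma ($v\in G\setminus H$ dominates $\mathcal{G}_H(G)$ if and only if $vH$ dominates $\mathcal{G}(G/H)$), proved from the coset-clique structure of Proposition~\ref{propoclique}, packages that computation once and for all and lets you quote Theorem~\ref{cono3} as a black box. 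This buys modularity and reusability (the same lemma would transfer Theorems~\ref{cono1}, \ref{cono2} and \ref{cono4} to the quotient setting just as cheaply), whereas the paper's argument is self-contained and exhibits the unique-subgroup-of-order-$p$ mechanism explicitly; the two proofs rest on the same group theory in the end.

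Two points need attention to make your chain airtight. First, Theorem~\ref{cono3} as stated applies only to \emph{non-abelian} $p$-groups, so in the \emph{only if} direction the case of an abelian quotient must be closed separately, either by Theorem~\ref{cono2} (an abelian $p$-group whose enhanced power graph has a nontrivial cone vertex must be cyclic, its Sylow $p$-subgroup being itself) or by the underlying fact you cite, that a nontrivial cone vertex in the enhanced power graph of any $p$-group forces a unique subgroup of order $p$; you gesture at this but should invoke it explicitly. Second, the ``degenerate case'' you flag is real and not merely interpretive: for cyclic $G/H$ (e.g. $G=\mathbb{Z}_{p^2}$ and $|H|=p$) the graph $\mathcal{G}_H(G)$ is complete, hence has the cone property, yet $G/H$ is not generalized quaternion, so the theorem as stated requires a non-cyclic (or non-abelian) hypothesis on $G/H$. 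The paper's own proof quietly has the same issue --- it stops at ``$G/H$ has a unique subgroup of order $p$,'' which by the classical classification yields cyclic \emph{or} generalized quaternion --- so your explicit handling of this case is, if anything, more careful than the original.
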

\begin{proof}
Suppose that $G$ is a generalized quaternion $p$-group. Thus, $\langle xH\rangle$ is the only subgroup of order $p.$
Let $z\in G\setminus H,$ then $|zH|=p^t$ with $t \geq 1.$ Hence, $\langle xH\rangle < \langle zH\rangle.$ Thus $x \sim z$ and therefore $x$ is a cone element.
Conversely, let $e \neq X$ a cone element de $\mathcal{G}_H(G),$
for all $g \in G\setminus H$ with $|gH|=p,$ $x\sim g,$ i.e., $\langle xH \rangle, \langle gH\rangle < \langle zH \rangle.$ Therefore,  $\langle gH \rangle < \langle xH\rangle.$ Henceforth each element of
$G/H$ of order $p$ belongs to a cyclic group. We conclude that $G/H$ has an unique subgroup of order $p.$
\end{proof}

\begin{theorem}\label{cono4}\emph{\cite{Bera-Bhuniya}}
Let $G$ be any simple group. Then $\mathcal{G}(G)$ does not satisfy the cone property.
\end{theorem}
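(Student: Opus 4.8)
The plan is to show that any cone vertex of $\mathcal{G}(G)$ must be a central element, and then to invoke the simplicity of $G$. First I would record the edge criterion for the enhanced power graph: two distinct vertices $v,w$ are adjacent precisely when $v,w\in\langle z\rangle$ for some $z\in G$, which is equivalent to saying that the subgroup $\langle v,w\rangle$ is cyclic (a subgroup of a cyclic group is cyclic, and conversely any generator of $\langle v,w\rangle$ serves as the witness $z$). Since $e\in\langle w\rangle$ for every $w$, the identity is adjacent to all vertices and is therefore always a trivial cone vertex; as in the earlier cone-property results of the excerpt (Theorems~\ref{cono2} and \ref{cono3}, where the cone elements are taken to be $\neq e$), the content of the statement concerns the existence of a cone vertex $v\neq e$, so I would argue by contradiction starting from such a $v$.

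Next I would extract the decisive structural consequence. Suppose $v\neq e$ is a cone vertex. Then for every $w\in G$ there is some $z$ with $v,w\in\langle z\rangle$; because $\langle z\rangle$ is abelian, $vw=wv$. Letting $w$ range over all of $G$, this shows $v\in Z(G)$. Hence any non-identity cone vertex must be a non-identity element of the center.

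Finally I would close the argument using simplicity. Since $G$ is a non-abelian simple group, its center $Z(G)$ is a normal subgroup that is proper (as $G$ is non-abelian), so $Z(G)=\{e\}$. This forces $v=e$, contradicting $v\neq e$. Therefore $\mathcal{G}(G)$ admits no non-identity cone vertex and does not satisfy the cone property.

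The argument is short once the reformulation in terms of common cyclic subgroups is in place, so the genuinely delicate points are conventional rather than computational, and they are the steps I would be most careful to state explicitly. Because $e$ is automatically adjacent to every vertex, the statement is only meaningful---and only consistent with Theorems~\ref{cono2} and \ref{cono3}---under the reading that a cone vertex is required to be distinct from $e$. I would also make explicit that the hypothesis must be read as \emph{non-abelian} simple: for the abelian simple groups $G\cong\mathbb{Z}_p$ the graph $\mathcal{G}(G)=K_p$ is complete, so every vertex is a cone vertex and the conclusion would fail; it is precisely non-abelianness that guarantees $Z(G)\neq G$ and hence $Z(G)=\{e\}$.
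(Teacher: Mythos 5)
Your proof is correct, but note that the paper does not actually prove this statement: it is quoted from Bera--Bhuniya, and the only argument the paper supplies is for the analogue that follows it ($\mathcal{G}_H(G)$ with $H$ a maximal normal subgroup of $G$). That in-paper argument takes a genuinely different route from yours: there, a cone vertex $x$ is confronted with an element $z$ such that $|zH|=p$ for a prime $p$ dividing $|xH|$; since a cyclic group has a unique subgroup of each order dividing its order, the adjacency $x\sim z$ forces $\langle zH\rangle < \langle xH\rangle$, so every subgroup of order $p$ of the simple quotient lies inside the single cyclic group $\langle xH\rangle$, producing a unique --- hence characteristic, hence normal --- subgroup of order $p$, contradicting simplicity. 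Your route instead extracts commutativity from adjacency (two adjacent vertices lie in a common cyclic, hence abelian, subgroup), concludes that any cone vertex is central, and finishes with $Z(G)=\{e\}$. Both arguments are sound; yours is shorter and more elementary, while the unique-minimal-subgroup mechanism is the one that powers the neighbouring results (Theorem~\ref{cono3} and its quotient analogue), where uniqueness of the subgroup of order $p$ is exactly the generalized-quaternion criterion, so the paper's choice buys uniformity across that section. Your two caveats are well taken and apply equally to the paper's formulation: since $e$ is adjacent to every vertex, ``cone vertex'' must mean a vertex distinct from $e$ (otherwise Theorem~\ref{cono2} and this statement are trivialized or false), and ``simple'' must be read as non-abelian simple, since $\mathcal{G}(\mathbb{Z}_p)=K_p$ is complete and every vertex is then a cone vertex.
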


\begin{theorem}
Let $H$ be a maximal normal subgroup of $G$. Then $\mathcal{G}_H(G)$ does not satisfy
the cone property.
\end{theorem}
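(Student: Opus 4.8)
The plan is to reduce the statement to Theorem~\ref{cono4} through the quotient group $G/H$. First I would record the group-theoretic input: because $H$ is a maximal normal subgroup of $G$, the quotient $G/H$ is a nontrivial \emph{simple} group. Indeed, by the correspondence theorem the normal subgroups of $G/H$ are exactly the images of the normal subgroups of $G$ lying between $H$ and $G$, and maximality of $H$ forces these to be only $H$ and $G$; hence $G/H$ has no proper nontrivial normal subgroup. Theorem~\ref{cono4} therefore applies to $G/H$ and tells us that $\mathcal{G}(G/H)$ has no (nonidentity) cone vertex.

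Next I would argue by contradiction, supposing that $\mathcal{G}_H(G)$ has a cone vertex $v$. Since the identity vertex is adjacent to every vertex of $\mathcal{G}_H(G)$, the substance of the cone property is the existence of a cone vertex distinct from $e$ (exactly as in the cited Theorem~\ref{cono4}), so I may take $v\in G\setminus H$, and thus $vH\neq H$ in $G/H$. The key step is to transport the cone condition on $v$ from $\mathcal{G}_H(G)$ down to $\mathcal{G}(G/H)$ by means of the proposition characterizing edges of $\mathcal{G}_H(G)$, namely that $\{a,b\}\in E(\mathcal{G}_H(G))$ if and only if $aH=bH$ or $\{aH,bH\}\in E(\mathcal{G}(G/H))$.

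Concretely, let $gH$ be any vertex of $\mathcal{G}(G/H)$ with $gH\neq vH$. If $gH=H$, then $H,vH\in\langle vH\rangle$ gives $\{vH,H\}\in E(\mathcal{G}(G/H))$ directly. If $gH\neq H$, I choose a representative $g\in G\setminus H$; this $g$ is a vertex of $\mathcal{G}_H(G)$, and since $v$ is a cone vertex we have $\{v,g\}\in E(\mathcal{G}_H(G))$. By the edge-characterization and $vH\neq gH$, this forces $\{vH,gH\}\in E(\mathcal{G}(G/H))$. Hence $vH$ is adjacent in $\mathcal{G}(G/H)$ to every other vertex, i.e. $vH$ is a nonidentity cone vertex of $\mathcal{G}(G/H)$, contradicting the simplicity conclusion of the first paragraph. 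Therefore no such $v$ exists and $\mathcal{G}_H(G)$ does not satisfy the cone property.

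The main obstacle I anticipate is not computational but a matter of convention around the identity: the identity vertex $e$ (respectively the identity coset $H$) is always universally adjacent, so under the literal definition it would trivially be a cone vertex of every such graph. The only reading under which both the hypothesis Theorem~\ref{cono4} and the present conclusion are meaningful is that a cone vertex means one \emph{different from the identity}. Once this convention is fixed, the descent $v\mapsto vH$ is immediate, and the simplicity of $G/H$ closes the argument at once.
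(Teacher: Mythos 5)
Your reduction is correct in substance, but it takes a genuinely different route from the paper's. The paper argues directly inside $G/H$: assuming $x$ is a cone vertex, it fixes a prime $p$ dividing $|xH|$ and, for an arbitrary coset $zH$ of order $p$, uses the adjacency $x\sim z$ to place $\langle xH\rangle$ and $\langle zH\rangle$ inside a common cyclic subgroup $C$ of $G/H$; since a cyclic group has a unique subgroup of each order dividing its order, and $p$ divides $|xH|$, this forces $\langle zH\rangle < \langle xH\rangle$, so $G/H$ has a unique (hence characteristic, hence normal) subgroup of order $p$, contradicting the simplicity of $G/H$. In other words, the paper re-runs the group-theoretic core of Theorem~\ref{cono4} for the quotient graph, whereas you transport the cone vertex itself from $\mathcal{G}_H(G)$ down to $\mathcal{G}(G/H)$ via the edge-characterization proposition of Section 2 and then invoke Theorem~\ref{cono4} as a black box. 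Your version is more modular and makes explicit exactly where simplicity enters; the paper's version is self-contained and exposes the actual mechanism (uniqueness of the order-$p$ subgroup). One caveat applies equally to both arguments: if $G/H\cong\mathbb{Z}_p$ (which is simple, and does occur when $H$ is maximal normal of prime index), then $G/H$ is cyclic, so $\mathcal{G}_H(G)$ is complete by the completeness theorem of Section 3 and every vertex is a cone vertex; this case escapes the paper's contradiction (the unique subgroup of order $p$ may be all of $G/H$) and also falsifies Theorem~\ref{cono4} as literally stated. So the theorem tacitly assumes $G/H$ is non-abelian simple, and neither your proof nor the paper's closes that gap --- yours at least flags the adjacent convention issue about the identity vertex, which the paper leaves implicit.
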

\begin{proof}
Suppose that $x$ is a cone element, $|Hx|=m$ and $p$ is  a prime such that $p\mid m.$
Let $zH$ with $|zH|= p$. Since $x\sim z$, $\langle xH \rangle, \langle zH\rangle < C$ cyclic implies $\langle zH \rangle < \langle xH \rangle.$ Therefore there exists a unique subgroup of order $p$
in the not simple group $G,$ a contradiction.
\end{proof}

\section{The properties: Eulerian, Hamiltonian and planar}
A closed walk in a graph $\mathcal{H}$ containing all the edges of $\mathcal{H}$ is called an \emph{Euler path} in $\mathcal{H}$. A
graph containing an Euler path is called an \emph{Euler graph} or \emph{Eulerian graph}. The following theorem due to Euler \cite{Euler},
characterises Eulerian graphs. Euler proved
the necessity part and the sufficiency part was proved by Hierholzer, \cite{Heirholzer}.

\begin{theorem}\emph{ (Euler) } A connected graph $\mathcal{H}$ is an Euler graph if and only if all vertices
of $\mathcal{H}$ are of even degree.
\end{theorem}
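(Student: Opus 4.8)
The plan is to prove the two implications separately, following the classical division of labor between Euler and Hierholzer referenced just above the statement.

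For the necessity, I would suppose that $\mathcal{H}$ admits a closed walk $W$ traversing every edge exactly once, fix an arbitrary vertex $v$, and count the contribution of $W$ to the degree of $v$. The key point is that every time the walk enters $v$ along an edge it must leave $v$ along a distinct edge, so the edges incident to $v$ are partitioned into entry--exit pairs; because $W$ is closed, even the first departure from the starting vertex is matched by the final arrival. Hence the number of edges at $v$ is even, and since $W$ uses each edge exactly once, $\deg(v)$ is even. As $v$ was arbitrary, every vertex has even degree.

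For the sufficiency, I would argue constructively, assuming $\mathcal{H}$ is connected with all degrees even. Starting at any vertex, greedily follow unused edges. The crucial observation is that whenever the walk arrives at a vertex other than the start, an odd number of its incident edges has been consumed, so by parity at least one unused edge remains to continue; thus the walk can terminate only by returning to the start, producing a closed walk $C_1$. If $C_1$ already contains every edge we are done; otherwise connectivity guarantees that some vertex $u$ on $C_1$ is incident to an unused edge. Deleting the edges of $C_1$ leaves every remaining degree even, so the same greedy procedure starting at $u$ yields a second closed walk $C_2$, which I would splice into $C_1$ at $u$. Iterating, the number of unused edges strictly decreases, so after finitely many splices one obtains a single closed walk covering all edges.

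The main obstacle is the sufficiency direction, and within it the verification that the splicing terminates with a walk that is genuinely closed and edge-exhaustive. Two points need care: first, that connectivity (and not merely even degree) is what guarantees a shared vertex $u$ between the current closed walk and the as-yet-unused edges, since without it the procedure could stall leaving edges in a disconnected part; and second, that each sub-walk removal preserves the even-degree hypothesis so that the inductive step continues to apply. I would handle termination by a straightforward descent on the number of remaining edges.
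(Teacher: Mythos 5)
Your proof is correct: the paper itself gives no proof of this theorem, quoting it as a classical result and attributing the necessity part to Euler and the sufficiency part to Hierholzer, and your argument (parity pairing of entries and exits for necessity; greedy extraction of closed trails plus splicing, with connectivity used to find a vertex $u$ of $C_1$ meeting an unused edge, for sufficiency) is precisely that classical Euler--Hierholzer proof. Nothing is missing beyond routine detail — the one step you flagged, that connectivity yields such a $u$, is settled by taking any path from an endpoint of an unused edge to $C_1$ and looking at the first vertex of the path lying on $C_1$ — so your write-up correctly supplies what the paper leaves to its references.
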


\begin{theorem}\emph{\cite{Bera-Bhuniya}}
Let $G$ be a group of order $n$. Then the enhanced power graph $\mathcal{G}(G)$ is Eulerian if
and only if $n$ is odd.
\end{theorem}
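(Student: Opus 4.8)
The plan is to invoke Euler's theorem---the connected graph $\mathcal{G}(G)$ is Eulerian if and only if every vertex has even degree---and to read the parity of the degrees directly off the group structure. First I would record that $\mathcal{G}(G)$ is connected: the identity $e$ lies in $\langle x \rangle$ for every $x \in G$, so $\{e, x\} \in E(\mathcal{G}(G))$ for every $x \neq e$. In particular $e$ is a cone vertex, and therefore $\deg(e) = n - 1$.

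This observation already disposes of the ``only if'' direction. If $\mathcal{G}(G)$ is Eulerian, then every degree is even; applying this to $e$ forces $n - 1$ to be even, that is, $n$ is odd.

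For the ``if'' direction, suppose $n$ is odd; I must show that every vertex has even degree. Fix $x \in G$ and consider its closed neighborhood $N[x] = \bigcup_{z \,:\, x \in \langle z\rangle} \langle z \rangle$, the union of all cyclic subgroups that contain $x$, so that $\deg(x) = |N[x]| - 1$. The key device is the inversion map $y \mapsto y^{-1}$, an involution of $G$ that preserves $N[x]$, since each $\langle z \rangle$ is a subgroup and hence closed under inverses. Its fixed points inside $N[x]$ are exactly the solutions of $y^2 = e$ lying in $N[x]$. Because $n$ is odd, $G$ has no element of order $2$, so the only fixed point is $y = e$, which does belong to $N[x]$ as $e \in \langle x \rangle$. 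Thus inversion partitions $N[x]$ into the singleton $\{e\}$ together with $2$-element orbits $\{y, y^{-1}\}$, whence $|N[x]|$ is odd and $\deg(x) = |N[x]| - 1$ is even. As $x$ was arbitrary, Euler's theorem yields that $\mathcal{G}(G)$ is Eulerian.

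The routine ingredients are the connectedness and the degree of $e$; the substantive step is the parity count for an arbitrary vertex, where the essential obstacle is seeing that one must work with the full closed neighborhood $N[x]$ rather than any single cyclic subgroup. Once $N[x]$ is written as a union of cyclic subgroups, its closure under inversion is immediate, and the only point demanding care is that $e$ is counted exactly once---this is precisely what makes the total odd rather than even and drives the whole argument.
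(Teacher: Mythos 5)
Your proof is correct, but it takes a genuinely different route from the paper's treatment. The paper itself does not prove this statement (it is cited from Bera--Bhuniya); its own argument for the Eulerian property is the proof of the analogous theorem for the enhanced quotient graph $\mathcal{G}_H(G)$, which recovers this statement at $H=\{e\}$. Both arguments run through Euler's criterion (connected plus all degrees even), and the ``only if'' direction is the same in both: the identity is a cone vertex, so $\deg(e)=n-1$ must be even. The divergence is in the ``if'' direction. The paper establishes evenness of degrees by writing an explicit degree formula
\[
\deg (g) = \bigl[(|C_1|-1)|H|+1\bigr]+ \sum_{i=2}^s|\overline{C_i}-A_i||H|,
\]
decomposing the neighborhood of $g$ along the maximal cyclic subgroups $\overline{C_1},\dots,\overline{C_s}$ of $G/H$ containing $\langle gH\rangle$, with correction terms $A_i$ to avoid double counting, and then checking the parity of each summand. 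You instead observe that the closed neighborhood $N[x]$ is a union of cyclic subgroups, hence closed under inversion, and pair each element with its inverse: when $n$ is odd the unique fixed point of inversion is $e$, so $|N[x]|$ is odd and $\deg(x)=|N[x]|-1$ is even. Your involution argument is the more economical one --- it sidesteps entirely the bookkeeping of maximal cyclic subgroups and the $A_i$ terms, which in the paper's write-up is handled rather loosely --- whereas the paper's approach buys an explicit degree formula that it then reuses in its even-order contradiction case. Both are valid proofs of the stated equivalence.
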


\begin{theorem}
The graph $\mathcal{G}_H(G)$ is Eulerian if and only if $|G|$ is odd.
\end{theorem}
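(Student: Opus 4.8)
The plan is to apply Euler's theorem. Since $\mathcal{G}_H(G)$ is connected, it is Eulerian if and only if every vertex has even degree, so the statement reduces entirely to a parity count of vertex degrees. The step I would establish first is a degree formula tying $\mathcal{G}_H(G)$ to the enhanced power graph of the quotient. Combining the edge characterization ($\{a,b\}\in E(\mathcal{G}_H(G))$ exactly when $aH=bH$ or $\{aH,bH\}\in E(\mathcal{G}(G/H))$) with the coset-clique structure of Proposition~\ref{propoclique}, and noting that every coset $gH$ with $g\notin H$ contributes all $|H|$ of its elements to $V$ while the identity coset $H$ contributes only the vertex $e$, I would prove that for each vertex $v$
\[
\deg_{\mathcal{G}_H(G)}(v)=|H|\cdot \deg_{\mathcal{G}(G/H)}(vH).
\]
The counting is routine: the $|H|-1$ same-coset neighbours of $v$ exactly offset the deficit caused by the identity coset contributing a single vertex, so the factor $|H|$ emerges cleanly.

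Given the formula, the reverse implication is short. If $|G|$ is odd then $[G:H]=|G/H|$ is odd, so the cited theorem applied to $G/H$ makes $\mathcal{G}(G/H)$ Eulerian; thus $\deg_{\mathcal{G}(G/H)}(vH)$ is even for every $v$, and the displayed identity forces every degree in $\mathcal{G}_H(G)$ to be even. Combined with connectedness, this yields that $\mathcal{G}_H(G)$ is Eulerian.

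I expect the forward implication to be the main obstacle, and I would attack it by contraposition: assuming $|G|$ is even, I must exhibit a vertex of odd degree. Writing $|G|=|H|\cdot[G:H]$, the parity of each vertex degree is governed jointly by $|H|$ and by the coset-degrees, so the argument must follow the prime $2$ through both factors simultaneously. If $[G:H]$ is even then, by the cited theorem, $\mathcal{G}(G/H)$ is not Eulerian and hence has a vertex $wH$ of odd degree; the crux is to convert this, through the factor $|H|$ in the degree formula, into an odd degree of the corresponding vertex of $\mathcal{G}_H(G)$. This conversion is the delicate point of the whole argument, and I would organize the forward direction as a case analysis on the $2$-adic valuations of $|H|$ and $[G:H]$, checking in each case that some vertex degree is odd; I expect essentially all of the work to concentrate here.
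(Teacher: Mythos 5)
Your degree identity is correct, and it even holds at the identity vertex: $\deg(e)=|G|-|H|=|H|([G:H]-1)=|H|\cdot\deg_{\mathcal{G}(G/H)}(H)$, so $\deg_{\mathcal{G}_H(G)}(v)=|H|\cdot\deg_{\mathcal{G}(G/H)}(vH)$ for \emph{every} vertex $v$. With connectedness and Euler's criterion this settles the backward implication exactly as you say, and it also settles the forward implication when $|H|$ is odd: since $\mathcal{G}(G/H)$ is connected, $[G:H]$ even gives a coset $wH$ of odd degree, and $|H|\deg(wH)$ is then odd. The genuine gap is the remaining case, which you defer to a case analysis on $2$-adic valuations; that analysis cannot succeed, because your own formula forecloses it. If $|H|$ is even, then every degree $|H|\cdot\deg_{\mathcal{G}(G/H)}(vH)$ is even, so there is no odd-degree vertex to exhibit, and since $\mathcal{G}_H(G)$ is connected it is Eulerian although $|G|$ is even. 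A concrete witness: $G=\mathbb{Z}_4$, $H=\{0,2\}$; the vertex set is $\{0,1,3\}$, the coset $1+H$ is a clique and $e$ is adjacent to everything, so the graph is $K_3$, which is Eulerian while $|G|=4$. Thus the implication you flagged as ``the delicate point'' is not merely delicate: with the paper's definitions it is false, and your formula proves instead that $\mathcal{G}_H(G)$ is Eulerian if and only if $|H|$ is even or $[G:H]$ is odd, equivalently if and only if $\deg(e)=|G|-|H|$ is even.

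The discrepancy with the paper is instructive. The paper's proof rests on the degree formula $\deg(g)=[(|C_1|-1)|H|+1]+\sum_{i=2}^{s}|\overline{C_i}-A_i||H|$, and the stray $+1$ there is precisely what manufactures an odd degree in the case where $|G|$ and $|H|$ are both even. Counting as you do --- the $|H|-1$ same-coset neighbours offsetting the deficit of the identity coset, which contributes only the vertex $e$ --- gives $\deg(g)=(u-1)|H|$, where $u$ is the number of cosets in the union of the maximal cyclic subgroups of $G/H$ containing $gH$; there is no $+1$, and the $\mathbb{Z}_4$ example adjudicates in your favour (true degree $2$ versus $3$ from the paper's formula). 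Indeed the paper's formula is inconsistent even with its own first claim: for $|G|$ odd it yields (even)$+1+$(even), an odd number, yet the text asserts $\deg(g)$ is even. So your counting is the correct one; the right conclusion to draw from it is not a cleverer parity argument but that the theorem as stated holds only under the additional hypothesis that $|H|$ is odd (in particular for $H=\{e\}$, recovering the Bera--Bhuniya result), and otherwise must be restated as above.
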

\begin{proof}
If $|G|$ is odd, then $\mathcal{G}_H(G)$ is Eulerian. If $\{ \overline{C_1}, \ldots, \overline{C_s} \}$ are the
maximal cyclic subgroups  of $G/H$
that contain $\langle gH \rangle$, with $g \in G\setminus H$. Then
$$\deg (g) = [(|C_1|-1)|H|+1]+ \sum_{i=2}^s|\overline{C_i}-A_i||H|,$$ where $A_{i}$ is the maximum of $C_{i}$ that is contained in some
$C_{j}$, for $j=2,3,\ldots,(i-1)$. Thus, $\deg (g)$ is even and $\deg (e) = |G|-|H|$ is even. \\

Suppose that $\mathcal{G}_H(G)$ is Eulerian, consequently $\deg(e) = |G|-|H|$ is even.
Therefore $|G|$ and $|H|$ are both even or they are both odd. \\

Suppose that $|G|$ and $|H|$ are both even, then for $g \in G\setminus H,$
$$\deg (g) = [(|C_1|-1)|H|+1]+ \sum_{i=2}^s|\overline{C_2}-A_2||H|$$
where $(|C_1|-1)|H|$ and $\sum_{i=2}^s|\overline{C_2}-A_2||H|$ are both even. Therefore, the degree of $g$ is odd, a contradiction.
\end{proof}

We conclude that the property of being Eulerian does not depend on the normal subgroup $H$.

\begin{corollary}
$\mathcal{G}_H(G)$ is Eulerian if and only if $|G|$ is odd if and only if $\mathcal{G}(G)$ is Eulerian.
\end{corollary}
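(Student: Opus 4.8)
The plan is to obtain this three-way equivalence purely by chaining the two immediately preceding theorems through their shared pivot condition, namely that $|G|$ is odd. No new graph-theoretic work is needed here; the substantive computation has already been carried out in the degree formula of the preceding theorem, so the role of the corollary is simply to record the conceptual consequence.

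First I would recall that the theorem just established gives the equivalence $\mathcal{G}_H(G)$ Eulerian $\iff |G|$ odd. Next I would invoke the result of Bera and Bhuniya, which for a group of order $n=|G|$ asserts $\mathcal{G}(G)$ Eulerian $\iff |G|$ odd. Since both equivalences pivot on exactly the same arithmetic condition on $|G|$, transitivity of ``if and only if'' immediately yields
\[
\mathcal{G}_H(G) \text{ Eulerian} \iff |G| \text{ odd} \iff \mathcal{G}(G) \text{ Eulerian}.
\]
As an internal consistency check, specializing the preceding theorem to $H=\{e\}$ recovers the Bera--Bhuniya statement verbatim, so the two inputs agree on the overlapping case.

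The only point requiring care --- and the reason the statement is not entirely vacuous --- is that both criteria must be phrased in terms of $|G|$ rather than $|G/H|$; otherwise the chain would break, since $|G/H|$ odd does not force $|G|$ odd. This is precisely what the degree analysis of the cone vertex $e$ secured in the preceding proof: because $\deg(e)=|G|-|H|$ must be even, the parities of $|G|$ and $|H|$ are forced to coincide, and the both-even case is then excluded by exhibiting a vertex of odd degree, leaving $|G|$ odd as the genuine threshold. Since this threshold makes no reference to the particular normal subgroup $H$, the corollary faithfully captures the observation already stated in the text, that being Eulerian is insensitive to the choice of $H$. I do not anticipate any genuine obstacle; the work is entirely front-loaded into the degree computation of the earlier theorem.
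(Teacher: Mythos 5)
Your proposal is correct and matches the paper's own (implicit) argument exactly: the corollary is meant to follow immediately by chaining the preceding theorem ($\mathcal{G}_H(G)$ Eulerian $\iff |G|$ odd) with the cited Bera--Bhuniya theorem ($\mathcal{G}(G)$ Eulerian $\iff |G|$ odd), which is why the paper states it without a separate proof. Your additional remark that both criteria pivot on $|G|$ rather than $|G/H|$ is a fair articulation of why the chain works, but it is the same route, not a different one.
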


\begin{definition}
A graph $\mathcal{H}$ is called \emph{Hamiltonian} if it has a cycle that meets every vertex. Such a cycle is called
a \emph{Hamiltonian cycle}.
\end{definition}

\begin{theorem}
Let $G$ be a finite group and $H$ be a normal subgroup. Then the enhanced quotient graph $\mathcal{G}_H(G)$ is Hamiltonian if
the enhanced power graph $\mathcal{G}(G/H)$ is Hamiltonian.
\end{theorem}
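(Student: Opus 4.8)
The plan is to recognize that $\mathcal{G}_H(G)$ is essentially a \emph{blow-up} of $\mathcal{G}(G/H)$ and then to lift a Hamiltonian cycle of the quotient graph, coset by coset, to a Hamiltonian cycle of $\mathcal{G}_H(G)$. First I would record the structural picture already furnished by the earlier results. Write $m = |H|$ and $k = [G:H]$, and list the cosets as $C_0 = eH, C_1, \ldots, C_{k-1}$. By the proposition that $gH \cap V(\mathcal{G}_H(G))$ is a clique, each non-identity coset $C_i$ (with $i \geq 1$) induces a copy of $K_m$ inside $\mathcal{G}_H(G)$, while the identity coset contributes only the single vertex $e$. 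By Proposition~\ref{propoclique} together with the adjacency characterization (that $\{a,b\} \in E(\mathcal{G}_H(G))$ if and only if $aH = bH$ or $\{aH,bH\} \in E(\mathcal{G}(G/H))$), whenever two distinct cosets $C_i, C_j$ are adjacent in $\mathcal{G}(G/H)$ every element of $C_i$ is joined to every element of $C_j$ in $\mathcal{G}_H(G)$, and when they are non-adjacent in $\mathcal{G}(G/H)$ there are no edges between them at all. Thus $\mathcal{G}_H(G)$ is obtained from $\mathcal{G}(G/H)$ by expanding each non-identity vertex into a clique $K_m$, leaving $eH$ as a single vertex, and joining two expanded cliques completely exactly when the original vertices were adjacent.

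Next, assuming $\mathcal{G}(G/H)$ is Hamiltonian, I would fix a Hamiltonian cycle
\[
C_{\pi(0)} \sim C_{\pi(1)} \sim \cdots \sim C_{\pi(k-1)} \sim C_{\pi(0)}
\]
of the $k$ cosets, where $\pi$ is a permutation of $\{0, \ldots, k-1\}$. The key point is that any two consecutive cosets in this cycle are adjacent in $\mathcal{G}(G/H)$ and hence, by the structural description above, are completely joined in $\mathcal{G}_H(G)$. I would then build a closed walk in $\mathcal{G}_H(G)$ by traversal: following the quotient cycle, each time I reach a non-identity coset $C_{\pi(j)}$ I enter its clique, run through all $m$ of its vertices along a Hamiltonian path of $K_m$ (which exists for any entry and exit vertices, since the coset induces a complete graph), and leave from the last vertex; when I reach the identity coset I simply pass through the single vertex $e$. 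Because consecutive cliques are completely joined, the exit vertex of one coset is automatically adjacent to the entry vertex of the next, so every transition is a genuine edge, and the final transition from $C_{\pi(k-1)}$ back into $C_{\pi(0)}$ closes the walk into a cycle.

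Finally I would check that this closed walk is Hamiltonian: the vertices it uses are exactly $e$ together with all $m$ elements of each non-identity coset, that is, all of $(G \setminus H) \cup \{e\}$, each visited precisely once. Hence $\mathcal{G}_H(G)$ is Hamiltonian.

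I do not expect a serious obstacle in this argument; the whole content is the structural reduction of the first paragraph, after which the cycle is assembled directly. The only points requiring a word of care are the degenerate cases. When $m = 1$ we have $H = \{e\}$, so $\mathcal{G}_H(G) = \mathcal{G}(G/H)$ and the claim is immediate. Otherwise the mere existence of a Hamiltonian cycle in $\mathcal{G}(G/H)$ already forces $k \geq 3$, so both the within-clique Hamiltonian paths and the complete joins between consecutive cosets are genuinely available, and the construction goes through.
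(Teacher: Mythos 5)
Your proposal is correct and follows essentially the same route as the paper: both lift the Hamiltonian cycle of $\mathcal{G}(G/H)$ to $\mathcal{G}_H(G)$ by using that each coset induces a clique, that cosets adjacent in the quotient graph are completely joined, and that the identity coset contributes only the vertex $e$. Your write-up merely adds explicit attention to the blow-up structure and the degenerate cases ($m=1$, $k\geq 3$), which the paper leaves implicit.
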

\begin{proof}
Let the enhanced power graph $\mathcal{G}(G/H)$ be Hamiltonian. Suppose $H = \{h_{1},h_{2},\ldots,h_{m}\}$. Since $\mathcal{G}(G/H)$ is Hamiltonian, there exists a Hamiltonian cycle $C = H \sim a_{1}H \sim a_{2}H \sim \cdots\sim a_{n}H \sim H$. So, $a_{i}H\sim a_{j}H$ implies that $a_{i}h_{r}\sim a_{j}h_{s}$ for all $r,s\in\{1,2,\ldots,m\}$ and every coset $a_{i}H$ is a clique in $\mathcal{G}_H(G)$. Hence, we can construct a Hamiltonian cycle in $\mathcal{G}_H(G)$ as follows:
$$e \sim a_{1}h_{1}\sim a_{1}h_{2} \sim\cdots\sim a_{1}h_{m}$$$$a_{1}h_{m}\sim a_{2}h_{1}\sim a_{2}h_{2}\sim\cdots\sim a_{2}h_{m}$$$$a_{2}h_{m}\sim\cdots\sim a_{n}h_{1}\sim a_{n}h_{2}\sim\cdots\sim a_{n}h_{m}\sim e.$$
\end{proof}

\begin{definition}
A \emph{planar graph} is a graph that can be embedded in the plane, i.e., it can be drawn on the plane in such a way that its edges intersect only at their endpoints.
\end{definition}

\begin{theorem} \emph{(Kuratowski 1930, ~\cite{Kuratowski})}
A finite graph is planar if and only if it does not contain a subgraph that is homeomorphic to $K_{5}$ or $K_{3,3}$.
\end{theorem}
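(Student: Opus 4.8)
The plan is to prove the two implications separately, with the forward (necessity) direction being routine and the reverse (sufficiency) direction carrying essentially all the difficulty. For necessity I would first establish that $K_5$ and $K_{3,3}$ are themselves non-planar by a counting argument from Euler's formula $V - E + F = 2$: for a simple connected planar graph on $V \geq 3$ vertices one has $E \leq 3V - 6$, which $K_5$ violates since $10 > 9$, and for a triangle-free such graph one has $E \leq 2V - 4$, which $K_{3,3}$ violates since $9 > 8$. Since subdividing an edge does not affect planarity, and since every subgraph of a planar graph is planar, a graph that embeds in the plane can contain no subgraph homeomorphic to $K_5$ or $K_{3,3}$.

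For sufficiency I would assume that $G$ contains no subdivision of $K_5$ or $K_{3,3}$ and argue that $G$ is planar by induction on the number of edges, reducing in two stages to the $3$-connected case. First stage: if $G$ is disconnected or has a cut vertex, it suffices to embed each block (maximal $2$-connected subgraph) separately, since planar embeddings can be amalgamated at a single vertex; so one may assume $G$ is $2$-connected. If $G$ then has a $2$-cut $\{u,v\}$, I would split $G$ along it into its several bridges, add the virtual edge $uv$ to each piece, observe that every piece still contains no Kuratowski subdivision (any such subdivision, being $3$-connected, cannot be separated by the $2$-cut), apply the induction hypothesis to obtain planar embeddings in which $uv$ bounds the outer face, and glue the pieces along $uv$. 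This leaves only the $3$-connected graphs to treat.

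Second stage, the crux: show that a $3$-connected graph $G$ with no $K_5$ or $K_{3,3}$ subdivision is planar. Here I would invoke the contraction lemma (Tutte--Thomassen) that every $3$-connected graph on at least five vertices has an edge $e = xy$ whose contraction $G/e$ is again $3$-connected. A preliminary lemma, using that $K_5$ and $K_{3,3}$ have maximum degree at most four, records that for these two graphs containing a subdivision is equivalent to containing a minor; hence $G/e$, being a minor of $G$, again contains no Kuratowski subdivision, and by induction $G/e$ is planar. By Whitney's theorem its embedding is combinatorially unique, so deleting the branch vertex $v_{xy}$ leaves its neighbours on the boundary cycle $C$ of a single face. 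I would then attempt to uncontract, re-inserting $x$ and $y$ into that face and distributing the original edges between them, and prove that if this cannot be done in the plane then the obstruction exhibits either three neighbours of $x$ interleaved along $C$ with three neighbours of $y$, yielding a $K_{3,3}$ subdivision, or a configuration forcing a $K_5$ subdivision, contradicting the hypothesis.

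I expect this final re-embedding analysis to be the main obstacle: one must classify the ways in which the neighbourhoods of $x$ and $y$ can fail to be separable along the face-cycle $C$ and display an explicit Kuratowski subgraph in each failing case. By contrast, the first-stage reductions are routine once the splicing of planar embeddings along a cut vertex and along a virtual edge $uv$ placed on the outer boundary is set up carefully, and the subdivision-versus-minor equivalence should be dispatched as a short preliminary before the induction begins.
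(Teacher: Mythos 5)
The paper does not prove this statement at all: it is Kuratowski's classical theorem, quoted with a citation to the 1930 original and used purely as a black box for the planarity characterization of $\mathcal{G}_H(G)$ that follows it. So there is no in-paper argument to compare yours against; what you have written is the standard modern proof (essentially Thomassen's, as in Diestel's textbook, which the paper cites for graph-theoretic background), and as an outline it is structurally sound: Euler-formula edge counts for necessity, reduction to the $3$-connected case via blocks and $2$-cuts, the Tutte--Thomassen contractible-edge lemma, and the re-insertion analysis around the face cycle $C$.

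Two of your justifications, however, would not survive being made precise. First, your parenthetical reason that each piece-plus-virtual-edge contains no Kuratowski subdivision (``any such subdivision, being $3$-connected, cannot be separated by the $2$-cut'') is wrong as stated: a subdivision of a $3$-connected graph is generally not $3$-connected, and the genuine issue is different, namely that a Kuratowski subdivision inside a piece may use the virtual edge $uv$, which does not exist in $G$. The standard repair is to replace that edge by a $u$--$v$ path running through another bridge of the $2$-cut, producing a Kuratowski subdivision in $G$ itself. Second, the subdivision-versus-minor equivalence cannot be deduced merely from ``maximum degree at most four'': maximum degree at most three gives the equivalence, which covers $K_{3,3}$, but $K_5$ has degree four, and the correct lemma is that a graph containing a $K_5$ minor contains a subdivision of $K_5$ \emph{or} of $K_{3,3}$ --- the equivalence holds for the Kuratowski pair jointly, not for $K_5$ alone. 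Your induction step ($G/e$ planar because it has no Kuratowski subdivision) still goes through with this corrected form of the lemma. With those two repairs, your plan is the complete standard proof; also note that Whitney's uniqueness theorem is not actually needed where you invoke it, since any embedding of $G/e$ already places the neighbours of the contracted vertex on the boundary cycle of the face containing it, that boundary being a cycle because $G/e$ minus a vertex is $2$-connected.
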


\begin{theorem}\emph{\cite{Bera-Bhuniya}}
Let $G$ a group. Then the enhanced power graph
$\mathcal{G}(G)$ is planar if and only if $\Pi_{\epsilon}(G) \subset \{1, 2, 3, 4\}.$
\end{theorem}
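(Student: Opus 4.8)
The plan is to prove both implications, reading the condition $\Pi_\epsilon(G)\subset\{1,2,3,4\}$ as the assertion that every element of $G$ has order in $\{1,2,3,4\}$ (the spectrum of $G$). The easy direction I would dispatch by contraposition. If some $x\in G$ has order at least $5$, then the five (or more) elements of $\langle x\rangle$ lie in a common cyclic subgroup and are therefore pairwise adjacent, so $\mathcal{G}(G)$ contains a clique on $|x|\ge 5$ vertices, hence a subgraph $K_5$. A graph with a $K_5$ subgraph is non-planar by Kuratowski's theorem, so planarity of $\mathcal{G}(G)$ forces $\Pi_\epsilon(G)\subset\{1,2,3,4\}$.

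For the converse I would produce a planar embedding by decomposing $\mathcal{G}(G)$ at the cut vertex $e$. The structural point is that $e$ is a cone vertex, so it suffices to understand the components of $\mathcal{G}(G)-e$, and these become transparent once we use that every cyclic subgroup of $G$ now has order $2,3,$ or $4$. Since $\gcd(3,2)=\gcd(3,4)=1$, no cyclic subgroup can contain both an element of order $3$ and an element of even order; hence an element $a$ of order $3$ is adjacent, apart from $e$, only to $a^2$, and each $\{a,a^2\}$ is a component (an edge). An involution that is not a square generates a maximal $\mathbb{Z}_2$ and is an isolated vertex of $\mathcal{G}(G)-e$. Finally, grouping the order-$4$ elements by their unique square $s$, the set $\{s\}\cup\{a,a^3 : a^2=s\}$ forms a single component: a $\mathbb{Z}_4$ has only one involution, and two order-$4$ elements are adjacent only when they generate the same $\langle a\rangle$, so no chain can pass from the $s$-group to an $s'$-group with $s'\ne s$.

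Thus $e$ is a cut vertex and $\mathcal{G}(G)$ is the union, glued only at $e$, of the cones over these components. Since a graph is planar if and only if each of its blocks is planar, it remains to embed each cone. The order-$3$ components yield triangles and the lone involutions yield single edges, both trivially planar. The remaining blocks are the cones $B_n:=e\ast F_n$, where $F_n$ is the friendship graph of the $n$ triangles $\{s,a_i,a_i^3\}$ sharing the vertex $s$; equivalently, $B_n$ is $n$ copies of $K_4$ all sharing the single edge $\{e,s\}$.

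I expect the planarity of $B_n$ for arbitrarily large $n$ to be the main obstacle, since the naive drawing that stacks one page above and one below the edge $\{e,s\}$ only handles $n\le 2$, and one must check that no $K_5$ or $K_{3,3}$ subdivision appears as $n$ grows (the only vertices of degree $\ge 4$ are $e$ and $s$, which blocks a $K_5$ subdivision, and the degree-$3$ vertices $a_i$ block a $K_{3,3}$ subdivision). The clean fix is an induction carrying the stronger invariant that both $e$ and $s$ lie on the boundary of a disk outer face: given such an embedding of $B_{n-1}$, I insert $a_n$ joined to $e$ and $s$ inside this disk, splitting it into two sub-disks each still incident to $e$ and $s$, then place $a_n^3$ in one sub-disk joined to $e,s,a_n$, leaving the other sub-disk as the new outer face so that $e$ and $s$ remain exposed. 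This realizes $B_n$ planarly for every $n$ and completes the embedding. As a consistency check, the smallest genuinely two-dimensional case $B_3$ is exactly the enhanced power graph of the quaternion group $Q_8$, whose planarity this construction confirms.
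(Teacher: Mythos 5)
Your proof is correct, but note that the paper you were asked to match contains no proof of this statement at all: it is quoted verbatim from Bera--Bhuniya \cite{Bera-Bhuniya} as an imported result (the paper only uses it alongside Proposition~\ref{propoclique} and Corollary~\ref{paraplanas} to derive the planarity criterion for $\mathcal{G}_H(G)$). So there is nothing in-paper to compare against, and your argument stands as a self-contained replacement. Both halves check out: the forward direction is the standard $K_5$-clique obstruction from a cyclic subgroup of order at least $5$; for the converse, your case analysis of the components of $\mathcal{G}(G)-e$ is accurate under the hypothesis $\Pi_{\epsilon}(G)\subset\{1,2,3,4\}$ (order-$3$ elements pair off into isolated edges, non-square involutions are isolated, and the order-$4$ elements with common square $s$ form the friendship-type component you describe, since a common cyclic overgroup of two order-$4$ elements must be of order exactly $4$). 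The reduction to blocks via the cone vertex $e$ is legitimate --- even in the degenerate case where $e$ fails to be a cut vertex because $\mathcal{G}(G)-e$ is connected, your argument only needs that each cone $e\ast C$ is planar --- and your induction embedding the ``book'' $B_n$ of $n$ copies of $K_4$ sharing the edge $\{e,s\}$, maintaining $e$ and $s$ on the outer face, is sound and is exactly the right invariant to carry. The $Q_8$ sanity check ($B_3=\mathcal{G}^{\ast}$-cone for the quaternion group, orders $\{1,2,4\}$) is consistent with the theorem. One could shorten the final step by observing that $B_n$ is the join of the edge $\{e,s\}$ with a disjoint union of $n$ edges and exhibiting the nested drawing directly, but your inductive formulation is equivalent and rigorous.
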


\begin{theorem}
The graph $\mathcal{G}_H(G)$ is not planar if and only if $|H|\geq4$ or $|H|=1$ and $s\geq5,$ or $|H|=2$ and $s\geq3,$ or $|H|=3$ and $s\geq2$,
where $s$ is the highest order of a
cyclic subgroup of $G/H$.
\end{theorem}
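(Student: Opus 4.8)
The plan is to combine Kuratowski's theorem with the clique structure recorded in Corollary~\ref{paraplanas}. First I would dispose of the case $|H|=1$ separately: here the vertex set is all of $G$, so $\mathcal{G}_H(G)=\mathcal{G}(G)$ and planarity is governed directly by the Bera--Bhuniya criterion $\Pi_{\epsilon}(G)\subset\{1,2,3,4\}$, i.e.\ every element order is at most $4$, which is exactly $s\le 4$. Thus for $|H|=1$ the graph is non-planar precisely when $s\ge 5$, as asserted. For the cases $|H|\ge 2$ the principal tool is the clique number: by the proof of Proposition~\ref{propoclique} (cf.\ Corollary~\ref{paraplanas}), a maximal cyclic subgroup $\langle zH\rangle$ of order $s$ in $G/H$ blows up to a clique consisting of the $|H|$ representatives of each of its $s-1$ non-identity cosets together with $e$, so $\omega(\mathcal{G}_H(G))=(s-1)|H|+1$.

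The non-planar direction for $|H|\ge 2$ then reduces to an inequality: whenever $(s-1)|H|+1\ge 5$ the graph contains $K_5$ and is non-planar by Kuratowski. Tabulating $(s-1)|H|\ge 4$ gives exactly $|H|\ge 4$ (with $G\ne H$, so $s\ge 2$), $|H|=2$ with $s\ge 3$, and $|H|=3$ with $s\ge 3$. This accounts for every non-planar case in the statement save the single boundary case $|H|=3,\ s=2$, where $\omega=4$ and no $K_5$ is available. In that case I would instead exhibit a subgraph homeomorphic to $K_{3,3}$, using the cone vertex $e$ together with paths routed through the triangles attached at $e$; producing this $K_{3,3}$ subdivision explicitly and certifying non-planarity at the $\omega=4$ threshold is the step I expect to be the main obstacle.

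For the converse (planarity of every configuration not on the list) I would give explicit plane drawings. When $s=2$ there are no cross-coset edges, so $\mathcal{G}_H(G)$ is the join of $e$ with a disjoint union of cliques $K_{|H|}$, one per non-identity coset. For $|H|=2,\ s=2$ this is the friendship graph of triangles sharing $e$, which is planar (each $K_3$ can be drawn in its own sector around $e$, and a routine check rules out a $K_{3,3}$ subdivision). For $|H|=1,\ s\le 4$ planarity is again furnished by the Bera--Bhuniya criterion. Assembling the two directions then yields the stated equivalence.

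The genuinely hard part is the boundary case $|H|=3,\ s=2$: since $K_4$ is planar, non-planarity there cannot be read off from a single clique and must instead come from the global multi-coset structure, i.e.\ from the join of $e$ with several disjoint triangles. The crux is therefore to decide, and then exhibit, a $K_{3,3}$ subdivision in that configuration and to reconcile it with the stated threshold; this delicate reconciliation at the $\omega=4$ level is where I anticipate the proof will require the most care.
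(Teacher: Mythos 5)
Your reduction is exactly the paper's intended one (the clique number $(s-1)|H|+1$ from Proposition~\ref{propoclique} and Corollary~\ref{paraplanas} --- note the paper's printed formula $|H|^{s-1}+1$ is a misprint for this --- combined with Kuratowski), and your tabulation of the $K_5$ cases is correct. But the step you flagged as ``the main obstacle'' is a genuine gap, and moreover it is a step that would fail rather than one requiring more care: in the case $|H|=3$, $s=2$ there is no $K_{3,3}$ subdivision, because the graph is planar. When $s=2$ the quotient $G/H$ has exponent $2$, so no cyclic subgroup of $G/H$ contains two distinct nonidentity cosets; hence there are no cross-coset edges and $\mathcal{G}_H(G)$ is exactly the join of $e$ with a disjoint union of triangles, i.e.\ its blocks are copies of $K_4$ glued at the cut vertex $e$. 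Since $K_5$ and $K_{3,3}$ are $2$-connected, any subdivision of either would have to lie inside a single block, and $K_4$ contains none; so $\mathcal{G}_H(G)$ is planar. A concrete counterexample to the theorem as printed: $G=\mathbb{Z}_6$, $H=\langle 2\rangle$, where $G/H\cong\mathbb{Z}_2$ is cyclic, so by the completeness theorem $\mathcal{G}_H(G)=K_4$, which is planar even though $|H|=3$ and $s=2$.

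So the delicate reconciliation you anticipated cannot be carried out: the statement itself is erroneous at this boundary, and the correct threshold for $|H|=3$ is $s\geq 3$ (where $\omega=2\cdot 3+1=7$ yields $K_5$), matching the pattern $(s-1)|H|\geq 4$ of all the other cases. The paper's own proof is a one-line citation of Proposition~\ref{propoclique} and Corollary~\ref{paraplanas} and silently glosses over this same case; your more careful execution of the identical strategy is what exposes the error. The remainder of your proposal is sound: the $|H|=1$ case correctly defers to the Bera--Bhuniya criterion (and this is genuinely needed, since $\omega\leq 4$ alone does not imply planarity), the $K_5$ cases follow from the clique bound, and your block-by-block drawings handle the planar cases with $s=2$ --- indeed the same block argument you sketch for $|H|=2$, $s=2$ is precisely what settles $|H|=3$, $s=2$ on the planar side.
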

\begin{proof}
It follows from the Proposition~\ref{propoclique} and Corollary~\ref{paraplanas}.
\end{proof}

\section{Deleted enhanced quotient graph}
\begin{definition}
In this section we consider the graphs $\mathcal{G}^\ast (G),\mathcal{G}_H^\ast (G)$ obtained by deleting the vertex $e$ from the graphs
$\mathcal{G}(G),\mathcal{G}_H(G)$, respectively. We call $\mathcal{G}^\ast (G),\mathcal{G}_H^\ast (G)$ the \emph{deleted enhanced power graph} and the \emph{deleted enhanced quotient graph}. The deleted graphs are not necessarily connected. \cite{Bera-Bhuniya}.
\end{definition}

\begin{theorem}\label{BBT5.1}\emph{\cite{Bera-Bhuniya}}
Let $G$ be a finite $p$--group. Then $\mathcal{G}^\ast(G)$ is connected if and only if $G$ has a unique minimal subgroup.
\end{theorem}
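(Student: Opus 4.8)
The plan is to distill from the $p$-group hypothesis a single invariant that every edge must respect, and then read off both directions almost immediately. The key elementary fact I would invoke is that a cyclic $p$-group has a \emph{unique} subgroup of order $p$, and that every nontrivial subgroup of a cyclic $p$-group contains this unique minimal subgroup. Using this, I attach to each vertex $x \in V(\mathcal{G}^\ast(G)) = G \setminus \{e\}$ the unique order-$p$ subgroup $\mu(x) \le \langle x\rangle$ (concretely $\mu(x) = \langle x^{p^{k-1}}\rangle$ if $|x| = p^{k}$). This is the only place the hypothesis that $G$ is a $p$-group enters, and it is precisely what makes the whole argument work.

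First I would verify that $\mu$ is constant along edges. If $\{x,y\} \in E(\mathcal{G}^\ast(G))$, then $x,y \in \langle z\rangle$ for some $z \in G$, so $\langle x\rangle, \langle y\rangle \le \langle z\rangle$; since $\langle z\rangle$ is a cyclic $p$-group, its unique minimal subgroup is contained in every nontrivial subgroup, forcing $\mu(x) = \mu(z) = \mu(y)$. Hence $\mu$ is constant on each connected component of $\mathcal{G}^\ast(G)$.

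For the forward direction I would argue contrapositively: if $G$ has two distinct minimal subgroups $\langle a\rangle \ne \langle b\rangle$ of order $p$, then $\mu(a) = \langle a\rangle$ and $\mu(b) = \langle b\rangle$ differ, so the vertices $a$ and $b$ lie in different components and $\mathcal{G}^\ast(G)$ is disconnected. For the converse, suppose $M = \langle a\rangle$ is the unique minimal subgroup. Every nontrivial $\langle x\rangle$ contains a subgroup of order $p$, which by uniqueness must be $M$, so $a \in \langle x\rangle$ for all $x \in G \setminus \{e\}$; thus $\{a,x\}$ is an edge whenever $x \ne a$, making $a$ a cone vertex and $\mathcal{G}^\ast(G)$ connected.

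I do not expect a serious obstacle once the cyclic $p$-group lemma is in hand, since both implications are then short. The only point demanding care is the edge-invariance of $\mu$, which crucially relies on $\langle z\rangle$ being a $p$-group: for a general group, two elements of a common cyclic subgroup need not share a minimal subgroup, so the statement genuinely requires the $p$-group hypothesis rather than merely the nilpotence of $G$.
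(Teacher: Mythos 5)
Your proof is correct. Note that the paper itself gives no proof of this statement: it is quoted from the reference of Bera and Bhuniya, and the paper only uses it as a template (``the argument is similar'') for its own analogue about $\mathcal{G}_H^\ast(G)$ when $G/H$ is a $p$-group. So the comparison is with the cited source rather than with anything in this paper; your argument is essentially the standard one there, organized cleanly: the invariant $\mu(x)=$ the unique order-$p$ subgroup of $\langle x\rangle$ is constant along edges (because the subgroups of a cyclic $p$-group form a chain), which kills connectivity when two minimal subgroups exist, while uniqueness of the minimal subgroup $M=\langle a\rangle$ makes $a$ a cone vertex of $\mathcal{G}^\ast(G)$, giving connectivity at once. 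Your packaging has the added benefit of transferring verbatim to the quotient setting: replacing $\langle x\rangle$ by $\langle xH\rangle$ proves the paper's Theorem on $\mathcal{G}_H^\ast(G)$ with no change, which is exactly the detail the paper leaves implicit. The only point left unaddressed (also unaddressed in the sources) is the degenerate case $G=\{e\}$, where the vertex set is empty; this is harmless.
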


\begin{theorem}
Let $G/H$ be a finite $p$--group. Then $\mathcal{G}_H^\ast (G)$ is connected if and only if $G/H$ has a unique minimal subgroup.
\end{theorem}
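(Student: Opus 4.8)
The plan is to reduce the statement to the corresponding fact about the deleted enhanced power graph $\mathcal{G}^\ast(G/H)$ and then to invoke Theorem~\ref{BBT5.1} applied to the $p$-group $G/H$. The key structural observation is that, once the vertex $e$ is removed, $\mathcal{G}_H^\ast(G)$ is a \emph{clique blow-up} of $\mathcal{G}^\ast(G/H)$: its vertex set is $G\setminus H$, every such vertex $x$ lies over a non-identity coset $xH\in (G/H)\setminus\{H\}$, and two vertices $x,y$ are adjacent exactly when $xH=yH$ or $\{xH,yH\}\in E(\mathcal{G}(G/H))$. Since $x,y\notin H$, the latter condition coincides with $\{xH,yH\}\in E(\mathcal{G}^\ast(G/H))$.

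First I would make the blow-up picture precise. Each coset $xH$ contributes the full clique $xH$ of size $m=|H|$, and if two distinct cosets $aH,bH$ are joined by an edge in $\mathcal{G}^\ast(G/H)$, then by Proposition~\ref{propoclique} the set $aH\cup bH$ is a clique of $\mathcal{G}_H^\ast(G)$, so the two fibres are joined completely; conversely, non-adjacent cosets contribute no edges between their fibres. Thus $\mathcal{G}_H^\ast(G)$ is obtained from $\mathcal{G}^\ast(G/H)$ by replacing each vertex with a clique of size $m$ and each edge with a complete bipartite join.

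Next I would establish the connectivity equivalence: $\mathcal{G}_H^\ast(G)$ is connected if and only if $\mathcal{G}^\ast(G/H)$ is connected. For the forward direction, projecting any path in $\mathcal{G}_H^\ast(G)$ to its sequence of cosets yields a walk in $\mathcal{G}^\ast(G/H)$, so connectivity descends. For the reverse direction, given $x,y\in G\setminus H$, take a path $xH=v_0\sim v_1\sim\cdots\sim v_k=yH$ in $\mathcal{G}^\ast(G/H)$ and lift it by choosing representatives $r_0=x,\,r_1\in v_1,\dots,r_{k-1}\in v_{k-1},\,r_k=y$; since consecutive fibres are completely joined, $x=r_0\sim r_1\sim\cdots\sim r_k=y$ is a path in $\mathcal{G}_H^\ast(G)$. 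Because $m\geq 1$ every fibre is nonempty, so no obstruction arises.

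Finally, applying Theorem~\ref{BBT5.1} to the finite $p$-group $G/H$ gives that $\mathcal{G}^\ast(G/H)$ is connected if and only if $G/H$ has a unique minimal subgroup, and combining this with the equivalence above completes the proof. The only genuine subtlety — and hence the step I would check most carefully — is bookkeeping the identity coset: deleting $e$ removes precisely the single vertex lying over $H$, so the correct base graph is $\mathcal{G}^\ast(G/H)$ rather than $\mathcal{G}(G/H)$. Once this matching of deleted vertices is verified, the remaining arguments are routine.
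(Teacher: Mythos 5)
Your proof is correct, but it takes a genuinely different route from the paper. The paper's entire proof is the sentence ``The argument is similar to the proof of the Theorem~\ref{BBT5.1}'', i.e.\ it proposes re-running Bera--Bhuniya's group-theoretic argument with elements replaced by cosets. You instead prove a structural transfer principle: $\mathcal{G}_H^\ast(G)$ is the blow-up of $\mathcal{G}^\ast(G/H)$ in which each non-identity coset becomes a clique of size $|H|$ (this is exactly the content of Proposition~\ref{propoclique} together with the paper's proposition that $\{a,b\}\in E(\mathcal{G}_H(G))$ iff $aH=bH$ or $\{aH,bH\}\in E(\mathcal{G}(G/H))$), you note that connectivity is invariant under blow-up by nonempty cliques, and then you quote Theorem~\ref{BBT5.1} as a black box applied to the finite $p$-group $G/H$. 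Your bookkeeping of the deleted vertex is the one point that genuinely needs care, and you handle it correctly: the elements of $H\setminus\{e\}$ were never vertices of $\mathcal{G}_H(G)$, so deleting $e$ leaves exactly the fibres over the non-identity cosets, matching the vertex set of $\mathcal{G}^\ast(G/H)$. What the two approaches buy: the paper's (if actually carried out) is self-contained but duplicates work; yours is modular and strictly more informative, since the same transfer principle immediately yields the other ``the argument is similar'' results in this section (e.g.\ the theorem on $|\Pi(Z(G/H))|\geq 2$ follows at once from the corresponding Bera--Bhuniya theorem) and makes clear that nothing about $p$-groups is used beyond the quoted theorem.
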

\begin{proof}
The argument is similar to the proof of the Theorem~\ref{BBT5.1}.
\end{proof}

For a finite group G, you have the following definitions:
$$\Pi_{\epsilon}(G)=\{|x| : x\in G\}, \; \Pi(G)=\{p\in\mathbb{N} : p\mid |G|\textmd{ is a prime }\},$$
and let $\mu(G)\subseteq\Pi_{\epsilon}(G)$ be the set of all maximal element of $\Pi_{\epsilon}(G)$ under the divisibility relation.

\begin{theorem}\emph{\cite{Bera-Bhuniya}}
Let $|\Pi(Z(G))|\geq 2$. Then the graph $\mathcal{G}^\ast (G)$ is connected.
\end{theorem}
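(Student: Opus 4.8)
The plan is to produce a two-element \emph{central hub} and then show that every other vertex reaches it by a path of length at most two. Since $|\Pi(Z(G))|\geq 2$, the order of the abelian group $Z(G)$ is divisible by two distinct primes $p\neq q$, so Cauchy's theorem supplies $a\in Z(G)$ of order $p$ and $b\in Z(G)$ of order $q$. As $a,b$ are central they commute, and $\gcd(p,q)=1$ forces $ab$ to have order $pq$ with $a,b\in\langle ab\rangle$; hence $a$ and $b$ are nontrivial and adjacent, i.e. $\{a,b\}\in E(\mathcal{G}^\ast(G))$. This edge is the hub to which I will connect everything else.

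The key observation I would isolate first is that \emph{a central element of prime order $p$ is adjacent to every element whose order is prime to $p$}: if $\gcd(|w|,p)=1$, then $\langle a\rangle\cap\langle w\rangle=\{e\}$ and $a$ commutes with $w$, so $\langle a,w\rangle=\langle a\rangle\times\langle w\rangle$ is cyclic of order $p\,|w|$ and $w\sim a$. The same holds for $b$ with $q$ in place of $p$. Now I fix an arbitrary vertex $x\in G\setminus\{e\}$. If $p\nmid|x|$ then $x\sim a$ and we are done. Otherwise write $|x|=p^{i}n$ with $i\geq 1$ and $\gcd(n,p)=1$; if $n=1$ then $x$ is a $p$-element, so $q\nmid|x|$ and $x\sim b$. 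If $n>1$, set $u=x^{p^{i}}$: it lies in the cyclic group $\langle x\rangle$ so $x\sim u$, it is nontrivial of order $n$, and $\gcd(n,p)=1$ gives $u\sim a$. In every case $x$ is joined to the hub $\{a,b\}$, and since $x$ was arbitrary the graph $\mathcal{G}^\ast(G)$ is connected.

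The step I expect to be the real obstacle is the case $pq\mid|x|$, where neither $a$ nor $b$ need be adjacent to $x$ directly: when $a\notin\langle x\rangle$ one has $\langle a,x\rangle\cong\mathbb{Z}_{|x|}\times\mathbb{Z}_{p}$, which is not cyclic because $p\mid|x|$, so the naive argument fails. Passing to the $p'$-part $u=x^{p^{i}}$ inside $\langle x\rangle$ is exactly what repairs this: $u$ stays adjacent to $x$ (same cyclic subgroup) while acquiring adjacency to the central element $a$ by coprimality. The only routine point I would verify carefully is that $u\neq e$, which is guaranteed by $n>1$, so that $u$ is a legitimate vertex of the deleted graph; the centrality of $a$ and $b$ is what makes all the relevant subgroups abelian and hence cyclic.
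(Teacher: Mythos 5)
Your proof is correct. Note that the paper itself only cites this statement from Bera--Bhuniya; what it actually proves is the quotient analogue (connectedness of $\mathcal{G}_H^\ast(G)$ when $|\Pi(Z(G/H))|\geq 2$), so the comparison is with that argument. The two proofs rest on the identical key mechanism --- a central element and an element of coprime order commute, generate $\mathbb{Z}_p\times\mathbb{Z}_n$ with $\gcd(p,n)=1$, hence lie in a common cyclic subgroup --- but they are organized differently. The paper anchors everything at a single central element $g$ of maximal order $t$ (the exponent of the center) and, for each vertex $x$, builds a length-three path $x\sim x^r\sim g^s\sim g$ in which both intermediate vertices $x^r$ and $g^s$ are reduced to \emph{prime} orders $p\neq q$; its case analysis is over whether $\Pi(|x|)=\Pi(t)$ or not, which is what guarantees two distinct primes can be chosen. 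You instead use Cauchy's theorem to fix once and for all two central elements $a,b$ of distinct prime orders and take the edge $\{a,b\}$ as the hub; your case split is on whether $p$ divides $|x|$, and your detour through the $p'$-part $u=x^{p^i}$ (which need not have prime order) exploits the coprime-order form of the lemma rather than the prime-order form. This buys you shorter paths (every vertex is within distance two of the hub edge) and a cleaner case analysis, including an explicit identification of exactly where the naive argument fails ($pq$ dividing $|x|$); the paper's choice of the maximal-order anchor is what its quotient-setting phrasing naturally suggests, but is otherwise no simpler. Your edge-case checks ($u\neq e$, the $p$-element case going to $b$, nontriviality of $a,b$) are precisely the right ones.
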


\begin{theorem}
For $Z:= Z(G/H)$ with $|\Pi (Z)| \geq 2$, $\mathcal{G}_H^\ast (G)$ is connected.
\end{theorem}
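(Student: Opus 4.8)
The plan is to reduce the statement to the corresponding result for the deleted enhanced power graph of the quotient. The starting point is the structural observation already packaged in Proposition~\ref{propoclique} together with the edge characterization proved above (namely that for $a,b\in V(\mathcal{G}_H(G))$ one has $\{a,b\}\in E(\mathcal{G}_H(G))$ iff $aH=bH$ or $\{aH,bH\}\in E(\mathcal{G}(G/H))$): the graph $\mathcal{G}_H^\ast(G)$ is the \emph{blow-up} of $\mathcal{G}^\ast(G/H)$ obtained by replacing each nonidentity coset $gH$ by the clique $K_m$ on its $m$ elements and joining two such cliques completely precisely when the corresponding cosets are adjacent in $\mathcal{G}(G/H)$. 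Concretely, the vertex set $G\setminus H$ of $\mathcal{G}_H^\ast(G)$ partitions into the fibres $gH$ lying over the vertices $gH$ of $\mathcal{G}^\ast(G/H)$; each fibre is a clique, and adjacency between two distinct fibres is all-or-nothing. First I would state and justify this blow-up description carefully.

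Next I would invoke the theorem of Bera and Bhuniya \cite{Bera-Bhuniya} recalled just above, applied not to $G$ but to the finite group $G/H$. Since $Z(G/H)=Z$ and $|\Pi(Z)|\geq 2$ by hypothesis, that theorem yields directly that $\mathcal{G}^\ast(G/H)$ is connected. It then remains to transfer connectivity across the blow-up: given any two vertices $x,y\in G\setminus H$, their cosets are joined in $\mathcal{G}^\ast(G/H)$ by a path $xH=c_0H\sim c_1H\sim\cdots\sim c_kH=yH$, and choosing a representative in each $c_iH$ and using that consecutive fibres are completely joined (and that each fibre is itself a clique) lifts this to a walk from $x$ to $y$ in $\mathcal{G}_H^\ast(G)$. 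Because $m=|H|\geq 1$ every fibre is nonempty, so the lift never breaks, and $\mathcal{G}_H^\ast(G)$ is connected.

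The one point needing care, and the main obstacle, is justifying the all-or-nothing adjacency between distinct fibres, i.e. verifying that $\mathcal{G}_H^\ast(G)$ really is a blow-up of $\mathcal{G}^\ast(G/H)$ rather than merely containing one; this is exactly what Proposition~\ref{propoclique} provides, since a single edge between $aH$ and $bH$ forces $aH\cup bH$ to be a clique. I would also note that the deletion of $e$ causes no trouble: removing $e$ from $\mathcal{G}_H(G)$ leaves vertex set $G\setminus H$, which is precisely the union of the fibres over the nonidentity cosets, matching the vertex set $(G/H)\setminus\{H\}$ of $\mathcal{G}^\ast(G/H)$.

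As an alternative that mirrors the cited proof more literally, one can argue directly inside $\mathcal{G}_H^\ast(G)$. By Cauchy's theorem pick central cosets $aH\in Z$ of order $p$ and $bH\in Z$ of order $q$ with $p\neq q$; then $aH\sim bH$, since their product generates a cyclic subgroup of order $pq$, so $\{a,b\}$ is an edge. For an arbitrary $x\in G\setminus H$, if $p\nmid|xH|$ then centrality of $aH$ and $\gcd(p,|xH|)=1$ give $x\sim a$, while if $p\mid|xH|$ the nontrivial $p$-part of $xH$, realized by some $w\in G\setminus H$, satisfies $x\sim w$ (same cyclic subgroup) and $w\sim b$ (centrality of $bH$ with $\gcd(q,|wH|)=1$). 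Thus every vertex is routed to the edge $\{a,b\}$, the troublesome case $p,q\mid|xH|$ being resolved precisely by passing to the prime-power part.
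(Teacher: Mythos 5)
Your proposal is correct, and your primary argument takes a genuinely different route from the paper's. The paper proves the statement directly inside $\mathcal{G}_H^\ast(G)$: it fixes a coset $gH\in Z$ of maximal order $t$ (the exponent of the abelian group $Z$) and routes every vertex $x\in G\setminus H$ to $g$ along a path $x\sim x^r\sim g^s\sim g$, where $r,s$ are chosen so that $|x^rH|=p$ and $|g^sH|=q$ are distinct primes; centrality of $g^sH$ together with $(p,q)=1$ forces $|x^rg^sH|=pq$, which supplies the middle edge. That is in substance your \emph{alternative} argument: you route every vertex to an edge $\{a,b\}$ between two central cosets of distinct prime orders and split cases on whether $p$ divides $|xH|$, while the paper routes to a single central element of maximal order and splits cases on whether $\Pi(|xH|)=\Pi(t)$; the engine (a central coset of coprime order yields a common cyclic overgroup) is identical, and both case analyses are complete. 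Your \emph{main} argument is different: you apply the quoted Bera--Bhuniya theorem~\cite{Bera-Bhuniya} to the finite group $G/H$ to conclude that $\mathcal{G}^\ast(G/H)$ is connected, and then transfer connectivity through the blow-up description of $\mathcal{G}_H^\ast(G)$, which is indeed justified by the paper's edge characterization ($\{a,b\}$ is an edge iff $aH=bH$ or $\{aH,bH\}\in E(\mathcal{G}(G/H))$) together with Proposition~\ref{propoclique}; the lift of paths works because every fibre is a nonempty clique and adjacency between distinct fibres is all-or-nothing. This reduction buys brevity and generality: it really shows that $\mathcal{G}_H^\ast(G)$ is connected if and only if $\mathcal{G}^\ast(G/H)$ is, so \emph{every} connectivity criterion for deleted enhanced power graphs (the $p$-group criterion, the non-central-element criterion, etc.) transfers to the quotient setting at once --- which is presumably what the paper gestures at when it disposes of the neighboring theorems with ``the argument is similar.'' What the paper's direct argument (and your alternative) buys in exchange is self-containedness --- no reliance on the cited theorem --- and explicit short paths through the center, hence a concrete diameter bound that the citation alone does not provide.
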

\begin{proof}
If $Z$ is abelian then $|\mu (Z)|=1,$ $\mu (Z) = \{t\}.$ We consider $gH \in Z$ such that $|gH|=t.$
We claim that for each $x \in G\setminus H,$ $x \sim g.$
\begin{itemize}
\item[Case 1] $\Pi (|xH|) = \Pi (t)$ \\
$|\Pi (Z)| \geq 2$ implies that there are $r, s \in \mathbb{N}$ and prime numbers $p, q$
such that $|x^rH =p|$ and $|g^s|=q$ \\
$(p,q)=1$ and $gH \in Z$ implies that $|x^rg^sH|=pq$ \\
$x^r H, g^s H \in \langle x^r g^s H \rangle.$ Therefore $x \sim x^r \sim g^s \sim g.$
\item[Case 2] $\Pi (|xH|) \neq \Pi (t)$ \\
Thus there is $p \in \Pi(|xH|) \setminus \Pi (t)$ or $p \in \Pi(t) \setminus \Pi (|xH|).$
\begin{itemize}
\item[subcase] $p \in \Pi (|xH|)\setminus \Pi (t)$ \\
Let $q \neq p$ be primes with $q \in \Pi (t),$ with a similar argument we have that $x\sim x^r \sim g^s \sim g.$ The other subcase is similar.
\end{itemize}
\end{itemize}
\end{proof}

A last result of the same nature, concerning the deleted enhanced power graph $\mathcal{G}^\ast(G),$ which has its analogue for the deleted enhanced quotient graph $\mathcal{G}_{H}^\ast(G)$ is the following:

\begin{theorem}\label{BBT5.3}\emph{\cite{Bera-Bhuniya}}
Let $|\Pi(G)|\geq2$ $(|\Pi(G/H)|\geq 2)$ and $|Z(G)|=p^{t}$ $(|Z(G/H)|=p^{t})$ for some prime $p\in\Pi(G)$ $p(\in\Pi(G/H))$. Then the graph $\mathcal{G}^\ast(G)$ $(\mathcal{G}_{H}^\ast(G))$ is
connected if and only if for some non--central element $x$ of order $p$ there exists a non $p$--element $g$
such that $x \sim g$ in the graph $\mathcal{G}^\ast(G)$ $(\mathcal{G}_{H}^\ast(G))$.
\end{theorem}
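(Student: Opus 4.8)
Since the assertion about the deleted enhanced power graph $\mathcal{G}^\ast(G)$ is precisely the result of \cite{Bera-Bhuniya} quoted here, the plan is to establish the parenthetical analogue for $\mathcal{G}_H^\ast(G)$ and to do so by transporting the whole question to the group $G/H$, where the quoted result applies verbatim. Concretely, I would first prove a reduction lemma stating that $\mathcal{G}_H^\ast(G)$ is connected if and only if $\mathcal{G}^\ast(G/H)$ is connected, and then check that every hypothesis, and the bridge condition in the statement, translate under the projection $x \mapsto xH$ into exactly the hypotheses and bridge condition of the quoted theorem applied to $G/H$.

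For the reduction, consider $\pi\colon V(\mathcal{G}_H^\ast(G)) = G\setminus H \to (G/H)\setminus\{H\} = V(\mathcal{G}^\ast(G/H))$ given by $x\mapsto xH$. By the proposition asserting that every coset $gH$ is a clique of $\mathcal{G}_H(G)$, each fibre $\pi^{-1}(aH)=aH$ is connected, and by the proposition characterising the edges of $\mathcal{G}_H(G)$, for $aH\neq bH$ one has $\{a,b\}\in E(\mathcal{G}_H^\ast(G))$ if and only if $\{aH,bH\}\in E(\mathcal{G}(G/H))$, that is, $\{aH,bH\}\in E(\mathcal{G}^\ast(G/H))$ since neither coset equals $H$. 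Thus $\mathcal{G}_H^\ast(G)$ is obtained from $\mathcal{G}^\ast(G/H)$ by replacing each vertex with a clique of size $|H|$. Consequently any path in $\mathcal{G}^\ast(G/H)$ lifts to a path in $\mathcal{G}_H^\ast(G)$ (moving inside a coset-clique, then across an edge), and conversely $\pi$ carries any path of $\mathcal{G}_H^\ast(G)$ to a walk of $\mathcal{G}^\ast(G/H)$; hence the two graphs are connected simultaneously.

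It then remains to match the data. The hypotheses $|\Pi(G/H)|\geq 2$ and $|Z(G/H)|=p^{t}$ are exactly those required by the quoted theorem for $G/H$. For the bridge condition, observe that if $xH$ is non-central of order $p$ and $gH$ is a non-$p$-element, then $xH\neq gH$, because a coset of order $p$ is a $p$-element and so cannot be a non-$p$-element; therefore, by the edge characterisation, $x\sim g$ in $\mathcal{G}_H^\ast(G)$ is equivalent to $xH\sim gH$ in $\mathcal{G}^\ast(G/H)$, while centrality and the $p$-element/non-$p$-element dichotomy are read off in $G/H$ on both sides. Thus the bridge condition holds for $\mathcal{G}_H^\ast(G)$ if and only if it holds for $\mathcal{G}^\ast(G/H)$, and combining the Bera--Bhuniya result applied to $G/H$ with the reduction lemma yields the stated equivalence.

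The main obstacle is this faithful translation rather than any graph-theoretic difficulty: one must check that centrality, element order, and the $p$-element/non-$p$-element distinction are all evaluated in the quotient $G/H$ (so that, for instance, a coset $gH$ central in $G/H$ need not arise from a central $g$), and that deleting the identity coset $H$ produces no mismatch of vertex sets. If instead a self-contained argument mirroring \cite{Bera-Bhuniya} were wanted, I would fix a central coset $cH$ of order $p$ in $G/H$ (available since $Z(G/H)$ is a nontrivial $p$-group) and show that every non-$p$-coset and every central coset is joined to $cH$ through a coset of order coprime to $p$ (which exists because $|\Pi(G/H)|\geq 2$), so that the central and non-$p$ cosets form a single component; connectivity of the whole graph is then exactly the assertion that every non-central coset of $p$-power order attaches to this component, and the stated bridge is the minimal such attachment. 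Both routes reduce to the same verification, so I would present the short reduction and relegate the direct computation to a remark.
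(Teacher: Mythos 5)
Your proposal is correct, but there is nothing in the paper to compare it against: Theorem~\ref{BBT5.3} is stated purely as a citation of Bera--Bhuniya, with the quotient version inserted parenthetically and no proof given---the preceding sentence merely asserts that the result ``has its analogue'' for $\mathcal{G}_{H}^\ast(G)$. So your argument fills a genuine gap rather than paralleling one of the paper's proofs. The reduction itself is sound and uses only facts the paper establishes: by the coset-clique proposition and the edge characterisation ($\{a,b\}\in E(\mathcal{G}_H(G))$ if and only if $aH=bH$ or $\{aH,bH\}\in E(\mathcal{G}(G/H))$), the deleted graph $\mathcal{G}_H^\ast(G)$ is exactly $\mathcal{G}^\ast(G/H)$ with each vertex blown up into a clique of size $|H|$ and each edge into a complete join, so one graph is connected if and only if the other is. The delicate points---that the hypotheses and the bridge condition must all be read in $G/H$, and that a non-central coset of order $p$ and a non-$p$-element coset are automatically distinct from each other and from $H$, so adjacency of representatives in $\mathcal{G}_H^\ast(G)$ matches adjacency of cosets in $\mathcal{G}^\ast(G/H)$---are precisely the ones you identify and resolve. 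Two further observations in your favour: your transport of the bridge condition depends only on cosets, so it is insensitive to whether the quantifier in the cited theorem is ``for some'' or ``for every'' non-central element of order $p$, making the reduction robust against that ambiguity in the quoted statement; and the blow-up lemma is a general principle that would also yield uniform proofs of the paper's other quotient analogues for deleted graphs (the $p$-group connectivity theorem and the $|\Pi(Z(G/H))|\geq 2$ theorem), which the paper handles by ``similar argument'' or by direct computation. Your closing self-contained sketch is rougher and dispensable, but you correctly present it as optional; the citation-plus-reduction route stands on its own.
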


Finally, the Corollary~\ref{corollarybipartita} for deleted graphs can be rewritten as follows:
\begin{theorem}
Let $G$ be a group. Then the following conditions are equivalent.
\begin{enumerate}
\item $\mathcal{G}^\ast(G)$ is bipartite;
\item $\mathcal{G}^\ast(G)$ is a forest;
\item $\mathcal{G}^\ast(G)$ has no cycle;
\item $|g|\leq3$ for every $g \in G$.
\end{enumerate}
\end{theorem}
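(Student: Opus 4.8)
The plan is to establish the cyclic chain of implications $(4)\Rightarrow(2)\Rightarrow(3)\Rightarrow(1)\Rightarrow(4)$, so that all four conditions become equivalent. Everything hinges on understanding which edges survive in $\mathcal{G}^\ast(G)$ once the cone vertex $e$ is deleted, and this is controlled entirely by the orders of the cyclic subgroups of $G$: two distinct non-identity vertices $x,y$ are adjacent precisely when $x,y\in\langle z\rangle$ for some $z\in G$.

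First I would prove $(4)\Rightarrow(2)$ by describing $\mathcal{G}^\ast(G)$ explicitly. Assuming $|g|\leq 3$ for every $g\in G$, every cyclic subgroup $\langle z\rangle$ has order $1$, $2$ or $3$. If $x,y$ are distinct non-identity vertices lying in a common $\langle z\rangle$, then since $|\langle z\rangle|\leq 3$ this forces $|\langle z\rangle|=3$ and $x,y$ to be its two non-identity elements. Because a non-identity element of order $3$ generates its own subgroup, each such element lies in a \emph{unique} cyclic subgroup of order $3$; hence distinct order-$3$ subgroups share no non-identity vertex, and every involution is isolated. Consequently $\mathcal{G}^\ast(G)$ is a disjoint union of isolated vertices (the involutions) and disjoint edges $K_2$ (one per cyclic subgroup of order $3$). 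This is a matching, and in particular a forest.

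The middle implications are immediate. $(2)\Rightarrow(3)$ holds because a forest is acyclic by definition, and $(3)\Rightarrow(1)$ holds because a graph with no cycle has no odd cycle and is therefore bipartite. For $(1)\Rightarrow(4)$ I would argue by contraposition: if some $g\in G$ has $|g|\geq 4$, then $\langle g\rangle\setminus\{e\}$ contains at least three distinct non-identity elements, all pairwise adjacent in $\mathcal{G}^\ast(G)$ since they lie in the common subgroup $\langle g\rangle$. These induce a copy of $K_3$, i.e. a triangle, which is an odd cycle, so $\mathcal{G}^\ast(G)$ is not bipartite. This closes the chain.

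The only step demanding genuine care is $(4)\Rightarrow(2)$, and specifically the verification that no edge runs between two different order-$3$ subgroups, so that the connected components are exactly single edges and isolated points. The remaining issue is purely a matter of bookkeeping: confirming that the relevant threshold is $|g|\geq 4$ rather than $|g|\geq 3$. This shift is exactly what one should expect, since deleting $e$ destroys the triangle $e\sim a\sim a^2\sim e$ that a cyclic subgroup of order $3$ contributes to $\mathcal{G}(G)$, and an order-$3$ subgroup therefore leaves behind only a single edge in $\mathcal{G}^\ast(G)$.
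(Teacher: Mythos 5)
Your proposal is correct, and it is worth noting that the paper itself contains no proof of this statement: the theorem is merely asserted as the deleted-graph rewriting of Corollary~\ref{corollarybipartita}, whose own justification ultimately rests on Proposition~\ref{teorema2.2GPE} (that $\mathcal{G}(G)$ contains a cycle if and only if $|a|\geq 3$ for some $a\in G$). Your argument therefore fills a gap rather than reproduces an existing proof, and it does so by a direct structural route: under condition (4) you show each element of order $3$ lies in a unique cyclic subgroup of order $3$ (since $\langle x\rangle\subseteq\langle z\rangle$ with both of order $3$ forces equality) and each involution is isolated, so $\mathcal{G}^\ast(G)$ is a disjoint union of copies of $K_1$ and $K_2$; in the other direction, $|g|\geq 4$ yields a triangle inside $\langle g\rangle\setminus\{e\}$. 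In effect you have proved the deleted analogue of Proposition~\ref{teorema2.2GPE}, namely that $\mathcal{G}^\ast(G)$ contains a cycle if and only if $|a|\geq 4$ for some $a\in G$, which is exactly the threshold shift the paper's bare assertion leaves implicit. The paper's (implicit) approach buys economy by reusing the machinery developed for the undeleted graphs; your approach buys a self-contained proof together with a sharper conclusion, since the explicit description of $\mathcal{G}^\ast(G)$ as a matching plus isolated vertices delivers conditions (1), (2), and (3) simultaneously rather than through separate implications. All steps in your chain $(4)\Rightarrow(2)\Rightarrow(3)\Rightarrow(1)\Rightarrow(4)$ are sound.
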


The \emph{circumference} of a graph is the length of the longest cycle.
\begin{theorem}
Let $G$ be a group and let $H$ be a normal subgroup of $G$, with $G\backslash H=\dot\cup_{i=1}^{k}g_{i}H$. Then the following conditions are equivalent.
\begin{enumerate}
\item $\mathcal{G}_H^\ast(G)$ is $k$-partite with partition $V=V_{1}\cup V_{2}\cup\cdots\cup V_{k}$; where $|V_{i}\cap g_{j}H|=1$ for all $i,j$.
\item The circumference of $\mathcal{G}_H^\ast(G)$ is $|H|$;
\item The clique number $\omega(\mathcal{G}_H^\ast(G)$) of $\mathcal{G}_H^\ast(G)$ is $|H|$;
\item $|aH|\leq2$ for every $aH \in G/H$;
\item $G/H \cong \mathbb{Z}_2 \times \mathbb{Z}_2 \times \cdots \times \mathbb{Z}_2$.
\end{enumerate}
\end{theorem}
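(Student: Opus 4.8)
The plan is to route every equivalence through a single structural dichotomy governed by $s$, the largest order of a cyclic subgroup of $G/H$. First I would record that, since $G\neq H$, we have $s\geq 2$, and that two distinct non-identity cosets $aH\neq bH$ can be joined by an edge of $\mathcal{G}_H^\ast(G)$ only if $aH,bH\in\langle zH\rangle$ for some $z$; as $\langle zH\rangle$ must then contain two distinct non-identity elements, this forces $|zH|\geq 3$. Conversely, if some cyclic subgroup of $G/H$ has order $\geq 3$, Proposition~\ref{propoclique} shows the two non-identity cosets it contains span a complete $K_{2m}$. Hence the key lemma: \emph{$\mathcal{G}_H^\ast(G)$ has no edge joining two distinct cosets if and only if $s=2$}, in which case each coset $g_iH$ is an isolated clique and $\mathcal{G}_H^\ast(G)=\dot\cup_{i=1}^k K_m$ with $m=|H|$.

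Next I would dispose of the purely group-theoretic link. Conditions (4) and (5) are equivalent by the standard fact that a finite group in which every element has order at most $2$ is an elementary abelian $2$-group, and $|aH|\leq 2$ for all $aH$ is exactly the statement $s=2$. It then remains to tie the three graph-theoretic conditions (1), (2), (3) to ``$s=2$''. I would prove $s=2\Rightarrow(1),(2),(3)$ and then each of $(1)\Rightarrow s=2$, $(2)\Rightarrow s=2$, $(3)\Rightarrow s=2$, which closes the loop.

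For the forward direction, assume $s=2$, so $\mathcal{G}_H^\ast(G)=\dot\cup_{i=1}^k K_m$. For (3), a maximum clique is the blow-up of a maximum clique of $\mathcal{G}(G/H)$ coming from a longest cyclic subgroup, so (deleting $e$ from the count of Corollary~\ref{paraplanas}) $\omega(\mathcal{G}_H^\ast(G))=(s-1)m=m$. For (2), a longest cycle must lie inside a single connected component $K_m$, whose circumference is $m$. For (1), I label the vertices of each clique $g_jH$ as $v_{j,1},\dots,v_{j,m}$ and set $V_i=\{v_{j,i}:1\le j\le k\}$; since there are no inter-coset edges, each $V_i$ is independent and meets every coset in exactly one vertex, giving the required partition. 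For the converses I use the same dichotomy contrapositively: if $s\geq 3$ then $\mathcal{G}_H^\ast(G)$ contains a $K_{(s-1)m}\supseteq K_{2m}$, whence $\omega\geq 2m>m$ (failing (3)) and a cycle of length $(s-1)m\geq 2m>m$ (failing (2)); moreover the two cosets in that $K_{2m}$ have every cross-pair of vertices adjacent, so any transversal meeting both is dependent and the partition of (1) cannot exist.

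The main obstacle I anticipate is condition (2) in the degenerate small cases: when $m=|H|\leq 2$ and $s=2$ the components $K_m$ carry no cycle at all, so ``circumference $=m$'' must be read with the convention that applies (or under the implicit hypothesis $|H|\geq 3$), and I would flag this boundary explicitly rather than let it silently break the equivalence. A secondary point of care is the bookkeeping in condition (1): the natural partition has $|H|$ parts, one per clique-position, so I would make sure the indexing in the statement is read consistently, the substantive content being simply that independent transversals exist precisely when inter-coset edges are absent.
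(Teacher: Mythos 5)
Your proposal cannot be compared with the paper's own argument for a simple reason: the paper gives none. This theorem (like the one just before it) is stated without proof, presented only as a rewriting of Corollary~\ref{corollarybipartita} for deleted graphs. Judged on its own merits, your route is correct and essentially complete. The dichotomy lemma --- $\mathcal{G}_H^\ast(G)$ has an edge between two distinct cosets if and only if $s\geq 3$, where $s$ is the maximal order of an element of $G/H$ --- is the right structural pivot: it reduces each of (1), (2), (3) to the assertion $s=2$, which is precisely (4), and (4) $\Leftrightarrow$ (5) is the standard exponent-$2$ fact. Your forward arguments (when $s=2$ the graph is $\dot\cup_{i=1}^{k}K_{|H|}$, giving $\omega=|H|$, circumference $|H|$, and the transversal partition) and your contrapositive arguments via the clique $K_{(s-1)|H|}\supseteq K_{2|H|}$ from Proposition~\ref{propoclique} are all sound; note that in citing Corollary~\ref{paraplanas} you tacitly used the corrected count $(s-1)|H|+1$ rather than the misprinted $|H|^{s-1}+1$, which is the right reading. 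Moreover, the two caveats you flag are genuine defects of the statement, not of your proof: (a) for $|H|\leq 2$ the components $K_{|H|}$ carry no cycle, so condition (2) fails while (4) and (5) hold (e.g.\ $G=\mathbb{Z}_2$, $H=\{e\}$ gives a graph of isolated vertices with no cycle at all), so (2) is equivalent to the rest only under $|H|\geq 3$ or a repaired convention; and (b) since each part is required to meet each of the $k$ cosets exactly once, the partition must have exactly $|H|$ parts, so the ``$k$-partite'' in (1) is literally consistent only when $k=|H|$ and should read ``$|H|$-partite.'' One micro-point: in the converse for (2) with $|H|=1$ and $s=3$ your ``longer cycle'' argument degenerates ($K_{(s-1)|H|}=K_2$ has no cycle), though (2) still fails there because the whole graph is a forest; this sits inside the boundary regime you already flagged.
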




\begin{thebibliography}{10}


\bibitem{Abdollahi} A. Abdollahi, S. Akbari, and H. R. Maimani. \newblock Non-commuting graph of a group. \newblock {\em J.
Algebra}. 298(2):468–-492, 2006.

\bibitem{Abe} S. Abe and N. Iiyori. \newblock A generalization of prime graphs of finite groups.\newblock {\em Hokkaido Math.
J.}, 29(2):391–-407, 2000.

\bibitem{AndersonLivings} D. F. Anderson, P. S, Livingston. \newblock  The zero-divisor graph of a commutative ring.
\newblock {\em J. Algebra.} 217:434--447, 1999.


\bibitem{Atani} S. E. Atani.\newblock  A ideal based zero divisor graph of a commutative semiring.\newblock {\em Glasnik Matematicki}. 44(64):141--153, 2009.

\bibitem{Ballester1} A. Ballester-Bolinches and J. Cossey. \newblock Graphs, partitions and classes of groups.\newblock {\em Monatsh.
Math.}, 166:309–-318, 2012.

\bibitem{Ballester2} A. Ballester-Bolinches, J. Cossey, and R. Esteban-Romero. \newblock On a graph related to permutability
in finite groups. \newblock {\em Ann. Mat. Pura Appl.}, 189(4):567–-570, 2010.

\bibitem{Bera-Bhuniya} S. Bera, A. K. Bhuniya. \newblock On some properties of enhanced power graph.  \newblock {\em arXiv:1606.03209v1}, 2016.

\bibitem{Bera-Bhuniya2} S. Bera, A. K. Bhuniya. \newblock Normal subgroup based power graph of a finite Group.  \newblock {\em Communications in Algebra}, 45 (8): 3251--3259, 2017.

\bibitem{Chakrabarty} I. Chakrabarty, S. Ghosh, M. K. Sen. \newblock Undirected power graphs of semigroups. \newblock {\em Semigroup Forum}. 78:410--426, 2009.
\bibitem{Diestel} R. Diestel. \newblock Graph theory. \newblock {\em volume 173 of Graduate Texts in Mathematics. Springer-Verlag,
Berlin, third edition}, 2005.

\bibitem{Elavarasan} B. Elavarasan, K. Porselvi. \newblock An ideal based zero divisor graph of posets. \newblock {\em Commun.
Korean Math. Soc.} 28:79--85, 2013.

\bibitem{Euler} L. Euler. \newblock Solutio problematics ad geometriamsitus pertinents.\newblock {\em Comment. Academiae
Sci. I Petropolitanae} 8 128-–140, (1736).

\bibitem{Heirholzer} C. Heirholzer. \newblock Ueber die Moglickkeit, Einen Linienzug ohne Wiederholung and
ohne Unterrechung zu umfahren. \newblock {\em Math. Ann.} 6 30-–32, (1893).

\bibitem{Herzog} M. Herzog, P. Longobardi, and M. Maj.\newblock  On a commuting graph on conjugacy classes of
groups.\newblock {\em Comm. Algebra}, 37(10):3369-–3387, 2009.

\bibitem{Kuratowski} K. Kuratowski, Kazimierz \newblock Sur le probl$\mathrm{\grave{e}}$me  des courbes gauches en topologie. \newblock
             Fund. Math. (in French), 15: 271–-283, 1930.

\bibitem{Redmond} S. P. Redmond.\newblock  An ideal-based zero divisor graph of a commutative ring. \newblock {\em Communication
in algebra}. 31:4425--4443, 2003.

\bibitem{Willians} J. S. Williams.\newblock  Prime graph components of finite groups. \newblock {\em J. Algebra}, 69:487–-513, 1981.

\bibitem{Wu} Y. F. Wu. \newblock  Groups in which commutativity is a transitive relation. \newblock {\em J. Algebra}, 207:165–-181, 1998.
\end{thebibliography}

\end{document}